\newtheorem{theorem}{Theorem}[section] 
\newtheorem{corollary}[theorem]{Corollary} 
\newtheorem{question}[theorem]{Question} 
\newtheorem{lemma}[theorem]{Lemma} 
\newtheorem{proposition}[theorem]{Proposition} 
\theoremstyle{definition} 
\newtheorem{definition}[theorem]{Definition} 
\newtheorem{example}[theorem]{Example} 
\theoremstyle{remark} 
\newtheorem{remark}[theorem]{Remark} 
\newcommand{\Ker}{\text{\normalfont{Ker}}}
\newcommand{\ad}{\text{\normalfont{ad}}}
\newcommand{\Ko}{K^{\text{\normalfont{o}}}}
\newcommand{\Ke}{K^{\text{\normalfont{e}}}}
\newcommand{\Sym}{\text{\normalfont{Sym}}}
\newcommand{\Dera}{\text{\normalfont{Dera}}}
\newcommand{\Lie}{\text{\normalfont{Lie}}}
\newcommand{\Ends}{\text{\normalfont{End}}^{S}}
\newcommand{\Enda}{\text{\normalfont{End}}^{A}}
\newcommand{\End}{\text{\normalfont{End}}}
\newcommand{\pr}{\text{\normalfont{pr}}}
\newcommand{\GL}{\text{\normalfont{GL}}}
\newcommand{\gl}{\mathfrak{gl}}
\newcommand{\Auto}{\text{\normalfont{Auto}}}
\newcommand{\Iso}{\text{\normalfont{Iso}}}
\newcommand{\Ad}{\text{\normalfont{Ad}}}
\newcommand{\deri}{\text{\normalfont{d}} }
\newcommand{\Id}{\text{\normalfont{Id}} }
\newcommand{\mg}{\mathfrak g }
\newcommand{\so}{{\mathfrak s }{\mathfrak o }}
\newcommand{\mh}{\mathfrak h }
\newcommand{\Kil}{\mathscr{K} }
\renewcommand{\d}{{\rm d}}
\renewcommand{\Im}{{\rm Im\,}}
\newcommand{\R}{\mathbb R}
\title[Symmetric Killing tensors on almost abelian Lie groups]{SYMMETRIC KILLING TENSORS ON ALMOST ABELIAN LIE GROUPS}
\author{Renan Berto-Cuevas}
\address{R.~Berto-Cuevas: Instituto de Matemática, Estatística e Computação Científica, Universidade Estadual de Campinas,  Rua Sergio Buarque de Holanda, 651, Cidade Universitaria Zeferino Vaz, 13083-859, Campinas, São Paulo, Brazil.}
\email{renanbertocuevas@ime.unicamp.br}
\author{Viviana del Barco}
\address{V.~del Barco: Instituto de Matemática, Estatística e Computação Científica, Universidade Estadual de Campinas,  Rua Sergio Buarque de Holanda, 651, Cidade Universitaria Zeferino Vaz, 13083-859, Campinas, São Paulo, Brazil.}
\email{delbarc@ime.unicamp.br}
\author{Andrei Moroianu}
\address{A.~Moroianu: Université Paris-Saclay, CNRS,  Laboratoire de mathématiques d'Orsay, 91405, Orsay, France, and Institute of Mathematics “Simion Stoilow” of the Romanian Academy, 21 Calea Grivitei, 010702 Bucharest, Romania}
\email{andrei.moroianu@math.cnrs.fr}
\subjclass[2020]{22E25, 53C30, 53D25} 
\keywords{Symmetric Killing tensor, Almost abelian Lie group} 
\begin{document}
	
	\begin{abstract}In this work we provide a complete characterization of left-invariant symmetric Killing 
tensors on almost abelian Lie groups endowed with a left-invariant Riemannian metric. We show in particular that all such tensors are decomposable, in the sense that they can be expressed as a polynomial in the Killing vector fields and the Riemannian metric.
	\end{abstract}
	
	\maketitle

\section{Introduction}
On a Riemannian manifold $(M,g)$, a symmetric $p$-tensor $K \in \Gamma(\Sym^{p}(TM))$ is said to be Killing if it is constant along geodesics, in the sense that for every geodesic $\gamma:I\to M$, the function $(\gamma')^{p} \lrcorner (K\circ \gamma)$ is constant on $I$. Equivalently, $K$ is Killing if and  only if its covariant derivative with respect to the Levi-Civita connection $\nabla$ of $g$ satisfies $$(\nabla_X K)(X,X,\ldots, ,X)=0,$$ for every $X\in \mathfrak X(M)$. 
 For $p=1$, one recovers the definition of Killing vector fields on $M$.

 Symmetric Killing tensors were first investigated for their applications in physics, where their relevance arose in the study of integrable systems: indeed, symmetric Killing tensors generate first integrals of the equations of motion. More recently, they have become a topic of interest in their own right, as in \cite{HMS16,semmelmann2002conformal}, and their conformal generalizations have been explored in connection with inverse problems, see \cite{GPSU16,PSU15}.

The simplest symmetric Killing tensors on a given Riemannian manifold are parallel tensors (in particular, the metric)  and Killing vector fields. A way of producing symmetric Killing tensors is as polynomials (i.e. sum of symmetric products) in these elementary symmetric Killing tensors. Then, a natural question arises:  on a given Riemannian manifold, can every symmetric Killing tensor be produced by this method? This is referred to as the decomposability problem. Some cases for which the answer to this question is positive were provided by \cite{takekilli,Thomps}, where it is shown that on spaces of constant sectional curvature, every symmetric Killing tensor can be written as a polynomial in Killing vector fields. On the contrary in \cite{Kol83, mat11} it is shown that some specific metrics on surfaces carry {\em indecomposable} symmetric Killing 2-tensors. However, the question is still open in higher degrees: there is no example of Riemannian surface carrying indecomposable symmetric Killing $p$-tensors for $p\ge 3$.

The papers \cite{HMS16,semmelmann2002conformal} contain a suitable formalism to deal with Killing tensors, and their conformal generalizations. This formalism was applied by the last two named authors to study left-invariant symmetric Killing 2-tensors on 2-step nilpotent Lie groups. More precisely, in \cite{dBM} it is shown that 
on every 2-step nilpotent Lie group of dimension $\leq$ 7,  every left-invariant symmetric Killing 2-tensor  is decomposable. Moreover, the upper bound on the dimension is sharp since there exist 2-step nilpotent Lie groups carrying families of indecomposable symmetric Killing 2-tensors in every dimension $\geq 8$.  First integrals of the geodesic flow on Lie groups endowed with left-invariant metrics, and on their compact quotients by lattices, have been studied by several authors \cite{Butler03,EB, KOR, Ov20,Ov22}.

The goal of this paper is to study left-invariant symmetric Killing tensors on a different class of solvable Lie groups, namely the almost abelian ones. Recall that a Lie group is called almost abelian if its Lie algebra contains a codimension one abelian ideal. The structure of their Lie algebras, which are called almost abelian as well, is encoded in an endomorphism $D\in \gl(n)$ (see Section \ref{sec:almostab} for details). Since the symmetric Killing tensors under study are left-invariant, we consider them as linear tensors on the Lie algebra.

In Theorem \ref{biggestthr}, we provide a characterization of left-invariant symmetric Killing $p$-tensors, for $p\geq 0$, that depends on the endomorphism $D$. We rely on this characterization to prove the main result of our paper, which is Theorem \ref{teo:main}: any left-invariant symmetric  Killing tensor on an almost abelian Lie group is decomposable, that is, it can be written as a polynomial in the metric and Killing vector fields. 

The paper concludes with a discussion of almost abelian Lie groups admitting a left-invariant metric of constant (non-positive) sectional curvature. The curvature constraint implies that every symmetric Killing tensor is a polynomial in Killing vector fields by \cite{takekilli,Thomps}. We are mainly interested in determining if the Killing vector fields that are linked to the algebraic structure, like left- and right-invariant Killing vector fields, are enough to generate all left-invariant Killing tensors. We show that this is the case for zero curvature in Proposition \ref{pro:flat}, but fails to be true otherwise (see Theorem \ref{teo:seccte}). 

\medskip

\noindent {\bf Acknowledgments:} R.~B-C. was partly supported by FAPESP grants 2022/11934-3 and 2023/03360-0 and by the Coordenação de Aperfeiçoamento de Pessoal de Nível Superior - Brasil (CAPES) - Finance Code 001. V.~dB. acknowledges financial support by FAPESP grants 2024/19272-5 and 2023/15089-9. A.~M. was partly supported by the PNRR-III-C9-2023-I8 grant CF 149/31.07.2023 {\em Conformal Aspects of Geometry and Dynamics}.   All three authors were partially supported by MATHAMSUD Regional Program 24-MATH-12. 

\section{{Preliminaries} }\label{sec:skt}

	Let $V$ be an $n$-dimensional real vector space equipped with an inner product $g$. For $p\geq 1$, the space of symmetric $p$-tensors on $V$, denoted by $\text{Sym}^{p}(V)$, is the vector subspace of the $p$-th tensor product $V^{\otimes p}$ of $V$ spanned by elements of the form 
	\[v_{1} \cdot \ldots \cdot v_{p}   :=   \sum_{\sigma \in \mathfrak{S}_{p} }   v_{\sigma(1)} \otimes \ldots \otimes v_{\sigma(p)} , \qquad \mbox{with }v_i\in V, \,i=1, \ldots, p.
	\] We may also denote $v\cdot \ldots\cdot v\in \Sym^p(V)$ simply by $v^{p}$. The inner product $g$ on $V$ can be extended to $\Sym^{p}(V)$ by setting: \begin{equation} \label{gextend}
		g(v_{1} \cdot \ldots \cdot v_{p},u_{1} \cdot \ldots \cdot u_{p}):= \sum_{\sigma \in \mathfrak{S}_{p} } g(v_{1},u_{\sigma(1)}) \ldots g(v_{p},u_{\sigma(p)}).
	\end{equation}
	 If $ \{ e_{i} \}_{1\leq i \leq n}$ is an orthonormal basis of $V$, the family $ {\{ e_{i_{1}} \cdot\ldots\cdot e_{i_{p}}| 1\leq i_{1}\leq \ldots \leq i_{p}\leq n\}}$ is an orthogonal (but not orthonormal) basis of $\Sym^{p}(V)$.

	Using the metric $g$, we identify $V$ and its dual $V^{*}$ and, more generally, $\Sym^{p}(V)$ with $\Sym^{p}(V^{*})$. Also, any symmetric $p$-linear map $T$ from $V^{p}$ to $\mathbb{R}$ can be identified with the following element in $\Sym^{p}(V)$ 
	\[
	\tfrac{1}{p!}\sum_{1\leq i_{1}\leq \ldots \leq i_{p}\leq n}T(e_{i_{1}},\ldots,e_{i_{p}})\, e_{i_{1}}\cdot\ldots\cdot e_{i_{p}}.
	\]
	Under this identification, $g$ corresponds to $\frac12L$, where $ L:=\sum_{1\leq i\leq n}e_{i}\cdot e_{i}\in \Sym^2(V)$.
 
	Let us denote by $\End(V)$ the set of endomorphism of $V$ and by $\End^{S}(V)$ and $\End^{A}(V)$ its subspaces of symmetric and skew-symmetric endomorphisms with respect to $g$. Then the linear map 
	\begin{equation}\label{eq:SM}
		\text{\normalfont{End}}(V) \rightarrow  \Sym^{2}(V), \quad 
		E \mapsto  S_{E} := \tfrac{1}{2} \sum_{1 \leq j \leq n} Ee_{j}  \cdot  e_{j},
	\end{equation} 
 is an orthogonal projector and has $\Enda(V)$ as kernel. In particular, denoting by $E^{*}$ the adjoint map of $E$ with respect to $g$, we have $S_{E} = S_{E^{*}}$, since $E-E^{*}$ is skew-symmetric. Also, when restricted to $\Ends(V)$, this map becomes an isomorphism whose inverse takes 
	$e_{i} \cdot e_{j}$ to the endomorphism  $ x\in V \mapsto g(x,e_{i})e_{j} + g(x,e_{j})e_{i}$, for all $i,j=1, \ldots, n$.
	
	Let us denote $\Sym^\bullet(V):=\sqcup_{p\geq 0}\Sym^p(V)$. Any $E\in \End(V)$ can be extended to a derivation of $\Sym^\bullet(V)$, preserving $\Sym^p(V)$ for all $p\geq 0$, as follows (we also denote by $E$ this extension): For $p=0$, we have $\Sym^0(V) = \mathbb{R}$ and $E$ is the zero map. For $p\geq 1$, take an homogeneous element  $ v_{1} \cdot \ldots \cdot v_{p} \in \Sym^p(V)$, and set
	\begin{equation} \label{eq:Mactdef}E(v_{1} \cdot \ldots \cdot v_{p}):= \sum_{1 \leq i \leq p}  E(v_{i}) \cdot v_{1} \cdot \ldots \cdot \widehat{v_{i}} \cdot \ldots \cdot v_{p},
	\end{equation} and extend it linearly to the whole space $\Sym^p(V)$. 
  	It is straightforward to check that  ${E(A\cdot B)  = E(A)\cdot B + A\cdot E(B) }$ for all $A,B\in \Sym^{\bullet}(V) $.

	The following properties will be useful and follow by direct computations: for all $E\in \End(V)$, $F \in \Ends(V)$,
	\begin{equation}\label{eq:NSM}
		E(g) =  2S_{E}, \qquad  E(S_{F}) = 2 S_{EF}.
	\end{equation}

	Let $(M,g)$ be a $n$-dimensional connected Riemannian manifold with Levi-Civita connection $\nabla$ and let $\{ e_{i} \}_{1\leq i \leq n}$ be a local orthonormal frame. A symmetric $p$-tensor on $M$ is a section of the bundle $\Sym^p(TM)$.
	\begin{definition}
		The Killing operator $\deri$ of $M$ is the linear map defined as \begin{equation}\label{dkilling}
			\begin{array}{crcl}
				\deri:& \Gamma(\text{\normalfont{Sym}}^{p} TM) &\rightarrow    & \Gamma(\text{\normalfont{Sym}}^{p+1} TM) \\
				& K &\mapsto & \deri(K):=\sum_{1 \leq i \leq n} e_{i} \cdot \nabla_{e_{i}} K.
			\end{array}
		\end{equation}
	\end{definition}		
	One can easily check that $\deri (K)$ does not depend on the choice of the local orthonormal frame. In addition,  $\deri$ satisfies the Leibniz rule: $\deri(R\cdot S) = \deri(R)\cdot S+ R\cdot \deri(S)$ for any $R, S$ symmetric tensors on $M$ \cite[Lemma 2.1]{HMS16}.
	
	With respect to this operator, we define the central notion of this paper, the symmetric Killing tensors on $(M,g)$.
	
	\begin{definition} \label{defk} A symmetric Killing $p$-tensor $K$ on $M$, with $p\geq 0$, is a section of the bundle ${\Sym^{p}(TM)}$ such that $\deri( K )= 0$. The set of all these tensors is a denoted by $ \Kil^{p}(M) $, and we further set ${\Kil(M) := \sqcup_{p\geq 0} \Kil^{p}(M)} $.
	\end{definition} 
	
	It is clear from \eqref{dkilling} that the Riemannian metric and, more generally, any parallel tensor on $M$ is a Killing tensor. Also, since $\deri$ satisfies the Leibniz rule,  every symmetric product of Killing tensors lies again in $\Kil(M)$. This motivates the following definition that is adapted from the ones given by \cite{dBM,Heiltes}.
	
	\begin{definition}\label{defdecomp}
			\begin{enumerate}
				\item 			 Let $(M,g)$ be a Riemannian manifold. The metric tensor and the Killing vector fields, i.e.~the elements of  the set  $\{g\} \cup \Kil^1(M)$, are called primitive Killing tensors of $M$.
				\item 				A symmetric Killing $p$-tensor $K \in \Kil^{p}(M)$, $p\geq 1$, is said to be decomposable if it is a polynomial in the primitive Killing tensors, i.e.~if $ K \in \mathbb{R} [g,\Kil^1(M)] $.
		\end{enumerate}		
	\end{definition}

		\begin{remark} \label{rmrk} In order to show that a symmetric Killing tensor $K$ has the decomposable expression $ P \in\mathbb{R} [g,\Kil^1(M)]$, it is enough to show that both tensors coincide in a non-empty open set of $M$. Indeed, it is well known that if a symmetric Killing tensor vanishes in a non-empty open set of a connected manifold, then it must vanish identically on $M$ \cite[Theorem 7]{matveev20}. Applying this result to $K-P$, proves the claim.
	\end{remark}

	\section{Left-invariant symmetric Killing tensors {on Lie groups}}\label{sec:leftinv}

	Along this section, $G$ is a connected Lie group endowed with a left-invariant Riemannian metric $g$. In addition, we denote with $\mg$ the Lie algebra of $G$, which carries an inner product, also denoted by $g$, induced by evaluating the Riemannian metric at the identity $e\in G$.  We fix an orthonormal basis $\{e_i\}_{i=1}^n$ of $\mg$.  Below, we study the Killing condition for left-invariant symmetric tensors on $G$. 
	
	The left and right translations on $G$ by an element $a\in G$ are denoted, respectively, by 
	\[\mathcal{L}_{a}: u \in G\mapsto au \in G, \mbox{ and } \mathcal{R}_{a}: u \in G\mapsto ua \in G.\] We may also consider the conjugation by $a$ defined as the composition $\mathcal{I}_a:= \mathcal{L}_a\circ \mathcal{R}_{a^{-1}} $ Clearly, each of these maps is a diffeomorphism. Left-invariance of $g$ is equivalent to $\mathcal{L}_a$ being an isometry of $(G,g)$  for all $a\in G$.
	
	\begin{definition}
		A $(k,l)$-tensor field $S$ defined on $G$ is said to be left-invariant if it satisfies $(\mathcal{L}_{a})_{*} S = S$, for all $a \in G$, where $(\mathcal{L}_{a})_{*}$ denotes the push-forward induced by $\mathcal{L}_a$.
	\end{definition} 
	
	With this notion, the space of left-invariant $(k,l)$-tensor fields on $G$ is identified with the space of $(k,l)$-linear tensors in $\mg$, namely, with elements of $\mg^{\otimes k} \otimes (\mg^{*})^{\otimes l} $. In particular, left-invariant symmetric $p$-tensors on $G$ are identified with $\text{\normalfont{Sym}}^{p}(\mg)$, and we denote by $\Kil^{p}(\mg)\subset \Sym^p(\mg)$ the subspace corresponding to left-invariant symmetric tensors on $G$ satisfying the Killing equation of the Definition \ref{defk}.
	
	We denote by $\nabla$ the Levi-Civita connection corresponding to $(G,g)$. 
	For left-invariant vector fields $X$ and $Y$ on $G$, $\nabla_{Y} X$ is again left-invariant; thus, through the above identification, $\nabla_{Y} X$ has a corresponding element in $\mg$, denoted by $\nabla_{y} x$, for $x,y$ the values of the vector fields $X,Y$ at the identity of $G$. Koszul's formula implies
	\begin{equation}
		\label{eq:koszul}
		\text{$\nabla_{y} x  =   \tfrac{1}{2} \Big( \ad_{y}x - \ad_{y}^{*}x - \ad_{x}^{*}y \Big)$, for all $x,y\in \mg$}.
	\end{equation}		
	One can easily show that for a left-invariant symmetric $p$-tensor $K$ on $G$, $\d(K) $ defined in \eqref{dkilling} is also left-invariant. Then, $\d(K)$ has a corresponding element in $\Sym^{p+1}(\mg) $ and we can consider the restriction of the map \eqref{dkilling} on left-invariant symmetric tensors as a map from $\Sym^{p}(\mg)$ to $ \Sym^{p+1}(\mg)$.

	In the context above, the following proposition gives an algebraic expression for the Killing operator when restricted to left-invariant tensor fields, and thus identified with the symmetric product in the Lie algebra.

		\begin{proposition} \label{pro:dKinv} For any $K\in \Sym^p(\mg)$, 
			\begin{equation}\label{eq:dK}
				\d (K)=\sum_{1\leq j \leq n} e_{j} \cdot\ad_{e_{j}}(K) ,
			\end{equation} where $\ad_{e_{j}}$ acts on $K$ as described in \eqref{eq:Mactdef}.
			
			\begin{proof}  We proceed by induction. For $p = 1$, Koszul's formula and \eqref{eq:SM} give
				\[
				\deri (x)=  \sum_{1 \leq i \leq n} e_{i} \cdot \nabla_{e_{i}} x  = -\tfrac{1}{2}\sum_{1 \leq i \leq n}  e_{i} \cdot \left( \ad_{x}e_{i} +\ad_{e_{i}}^{*}x + \ad_{x}^{*}e_{i} \right) =  \sum_{1 \leq i \leq n}  e_{i} \cdot \ad_{e_{i}}x 
				%
				\] where the last equality holds since 
				$$\sum_{1 \leq i \leq n}e_i\cdot \ad_x^*e_i=\sum_{1 \leq i,j \leq n}e_i\cdot e_j \,g(\ad_x^*e_i,e_j)=\sum_{1 \leq i,j \leq n}e_i\cdot e_j \,g(e_i,\ad_xe_j)=\sum_{1 \leq j \leq n}\ad_xe_j \cdot e_j,$$ 
				 and the sum of the middle terms vanishes.
				
				Now, suppose \eqref{eq:dK} holds for symmetric tensors of degree $\leq p$. Then, applying the Leibniz rule for $\d$ and $\ad_{e_j}$ and the inductive hypothesis, we get for every $x\in\mg$ and $K\in \Sym^p(\mg)$:
				$$\sum_{1\leq j \leq n} e_{j} \cdot\ad_{e_{j}}(x\cdot K) =\sum_{1\leq j \leq n} e_{j} \cdot(\ad_{e_{j}}(x)\cdot K+x\cdot \ad_{e_{j}}(K))=\d(x)\cdot K+x\cdot \d(K) =\d(x\cdot K).$$ The formula thus holds for tensors in $\Sym^{p+1}(\mg)$ of the type $x\cdot K$ with $x\in\mg$ and $K\in \Sym^p(\mg)$, so by linearity it holds on $\Sym^{p+1}(\mg)$.
			\end{proof}
		\end{proposition}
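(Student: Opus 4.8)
The plan is to prove the identity by induction on $p$, exploiting the fact that both the Killing operator $\d$ and the derivation $\ad_{e_j}$ satisfy a Leibniz rule on $\Sym^\bullet(\mg)$. Once the formula is established in degrees $0$ and $1$, it should then propagate to all degrees automatically through the multiplicative structure, so the real content lies entirely in the base case.

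For $p=1$ I would start from $\d(x)=\sum_i e_i\cdot\nabla_{e_i}x$ and substitute Koszul's formula \eqref{eq:koszul}, writing $\nabla_{e_i}x=\tfrac12\bigl(\ad_{e_i}x-\ad_{e_i}^{*}x-\ad_x^{*}e_i\bigr)$. This yields three sums that must be reorganized into the single expression $\sum_i e_i\cdot\ad_{e_i}x$. Two manipulations drive the computation. First, one moves the adjoint across the metric pairing: expanding $\ad_x^{*}e_i$ in the orthonormal basis and using $g(\ad_x^{*}e_i,e_j)=g(e_i,\ad_x e_j)$ shows $\sum_i e_i\cdot\ad_x^{*}e_i=\sum_j \ad_x e_j\cdot e_j$, which by commutativity of the symmetric product coincides with $-\sum_i e_i\cdot\ad_{e_i}x$; combined with the first Koszul term this accounts for the full answer. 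Second, the remaining middle term $\sum_i e_i\cdot\ad_{e_i}^{*}x$ must vanish, and I expect this to follow from a symmetry/antisymmetry argument: writing $\ad_{e_i}^{*}x=\sum_k g(x,[e_i,e_k])\,e_k$ expresses the sum as $\sum_{i,k} g(x,[e_i,e_k])\,e_i\cdot e_k$, a contraction of the coefficient $g(x,[e_i,e_k])$, antisymmetric in $i,k$, against the symmetric product $e_i\cdot e_k$, hence zero.

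For the inductive step, assuming the formula in all degrees $\leq p$, I would use that every element of $\Sym^{p+1}(\mg)$ is a linear combination of products $x\cdot K$ with $x\in\mg$ and $K\in\Sym^p(\mg)$, so by linearity it suffices to verify the identity on such products. Applying the Leibniz rule $\ad_{e_j}(x\cdot K)=\ad_{e_j}(x)\cdot K+x\cdot\ad_{e_j}(K)$ inside the sum $\sum_j e_j\cdot\ad_{e_j}(x\cdot K)$ and then regrouping, the $p=1$ case and the inductive hypothesis identify the two resulting pieces as $\d(x)\cdot K$ and $x\cdot\d(K)$; the Leibniz rule for $\d$ then recognizes their sum as $\d(x\cdot K)$, completing the induction.

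The main obstacle I anticipate is the bookkeeping of the three Koszul terms in the base case: the cancellation of the $\ad^{*}$-term and the recombination of the other two rely delicately on the interplay between the symmetry of the product $\cdot$ and the antisymmetry of the Lie bracket, and it is easy to mislay a sign or the factor $\tfrac12$. Once those two identities are secured, the degree-one case is immediate and the inductive step is purely formal, depending only on the fact that $\d$ and each $\ad_{e_j}$ are derivations of $\Sym^\bullet(\mg)$.
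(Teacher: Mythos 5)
Your proposal is correct and follows essentially the same route as the paper: Koszul's formula for the base case $p=1$, with the $\ad_x^{*}$-term recombined via adjointness and the $\ad_{e_i}^{*}$-term killed by contracting an antisymmetric coefficient against the symmetric product, followed by the same Leibniz-rule induction on products $x\cdot K$. Your explicit antisymmetry argument for the vanishing of $\sum_i e_i\cdot\ad_{e_i}^{*}x$ is a detail the paper merely asserts, and it is the right one.
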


		The previous proposition allows us to characterize symmetric Killing tensors of degrees 1 and 2. 
		
		\begin{corollary}   \label{cor:dK12} 
			\begin{enumerate}
				\item For every $x\in \mg$, $\d(x)=-2S_{\ad_x}$. 
				\item Given $K \in \Sym^2(\mg)$ a symmetric 2-tensor, one has
				\[K \in \Kil^{2}(\mg)  \mbox{ if and only if } \sum_{1\leq i \leq n} e_{i} \cdot S_{\ad_{e_{i}}\circ K}  = 0 ,\]
				where, on the right hand side, $K$ is viewed as a symmetric endomorphism of $\mg$.
			\end{enumerate}
		\end{corollary}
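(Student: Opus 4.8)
The plan is to deduce both statements directly from the algebraic expression for the Killing operator obtained in Proposition~\ref{pro:dKinv}, namely $\d(K)=\sum_{1\leq j\leq n}e_j\cdot\ad_{e_j}(K)$, together with the definition \eqref{eq:SM} of $S_E$ and the derivation identities \eqref{eq:NSM}. Neither part requires new input beyond these results; both are formal manipulations.

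For part (1), I would specialize \eqref{eq:dK} to a degree-one tensor $x\in\mg$, where $\ad_{e_j}$ acts simply as the bracket, giving $\d(x)=\sum_{1\leq j\leq n}e_j\cdot\ad_{e_j}x$. The only thing to notice is the sign coming from $\ad_{e_j}x=-\ad_x e_j$: reindexing and using commutativity of the symmetric product turns the right-hand side into $-\sum_{1\leq j\leq n}\ad_x e_j\cdot e_j$, which by \eqref{eq:SM} is exactly $-2S_{\ad_x}$. This is a one-line computation.

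For part (2), the idea is to pass between the two avatars of $K$. Since $E\mapsto S_E$ restricts to an isomorphism $\Ends(\mg)\to\Sym^2(\mg)$, I would write $K=S_{\tilde K}$ for the unique symmetric endomorphism $\tilde K$ with which $K$ is identified in the statement. Applying the second identity in \eqref{eq:NSM} with $E=\ad_{e_j}$ and $F=\tilde K$ gives $\ad_{e_j}(K)=\ad_{e_j}(S_{\tilde K})=2S_{\ad_{e_j}\circ\tilde K}$, and substituting into \eqref{eq:dK} yields $\d(K)=2\sum_{1\leq j\leq n}e_j\cdot S_{\ad_{e_j}\circ\tilde K}$. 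Since the factor $2$ is irrelevant, $\d(K)=0$ holds if and only if $\sum_{1\leq j\leq n}e_j\cdot S_{\ad_{e_j}\circ\tilde K}=0$, which is the asserted equivalence once $\tilde K$ is renamed $K$.

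There is no genuine obstacle here, as both claims are short corollaries of the preceding results. The only point demanding attention is the careful bookkeeping between $K$ as a symmetric $2$-tensor and $K$ as a symmetric endomorphism, and making sure the derivation identity $E(S_F)=2S_{EF}$ is invoked with $F=\tilde K$ symmetric, which is precisely the hypothesis under which \eqref{eq:NSM} was stated.
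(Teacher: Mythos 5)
Your proposal is correct and follows essentially the same route as the paper: part (1) is read off from Proposition~\ref{pro:dKinv} together with \eqref{eq:SM}, and part (2) combines Proposition~\ref{pro:dKinv} with the identity $E(S_F)=2S_{EF}$ from \eqref{eq:NSM} under the identification $\Sym^2(\mg)\cong\Ends(\mg)$ given by $S$. The bookkeeping between the two avatars of $K$ that you highlight is exactly the point the paper makes when it says ``$K$ is viewed as an element in $\End^S(\mg)$''.
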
	
		
		\begin{proof} The first assertion follows directly from \eqref{eq:SM} and \eqref{eq:dK} (see also \cite[Section 2.2]{dBM4}). For the second item, let $K\in \Sym^2(\mg)$. Proposition \ref{pro:dKinv} and \eqref{eq:NSM} imply 
			\[ \d(K)= \sum_{1\leq i \leq n}e_{i}\cdot \ad_{e_{i}}(K)= 2\sum_{1\leq i \leq n}e_{i}\cdot S_{\ad_{e_{i}}\circ K}, \] where $K$ is viewed as an element in $\End^S(\mg)$ in the last equation. This proves our claim.
		\end{proof}
		
		For every $(k,l)$-tensor field $S$ defined on $G$, we consider the map \begin{equation} \label{mapone}
			u \in G \mapsto (\mathcal{L}_{u^{-1}})_{*} S_{u} \in  \mg^{\otimes k} \otimes (\mg^{*})^{\otimes l}. 
		\end{equation}  In particular,  $S$ is left-invariant if and only if this map is constant. The composition of this map with the exponential map of $G$ defines the function
		\begin{equation} \label{mapomeg}
			\begin{array}{crcccl}
				\Omega_{S} : & \mg &\rightarrow & G & \rightarrow &  \mg^{\otimes k} \otimes (\mg^{*})^{\otimes l} \\
				& w & \mapsto &  \exp(w) & \mapsto &  (\mathcal{L}_{\exp(-w)})_{*} S_{\exp(w) } .
			\end{array}
		\end{equation} 
		In particular, for every symmetric product of vector fields, one has \begin{equation} \label{omegvf}
			\text{$\Omega_{X_{1}\cdot \ldots\cdot X_{k}}(w) = \Omega_{X_{1}}(w)\cdot \ldots\cdot \Omega_{X_{k}}(w)$, for all $w\in \mg$, $X_{i}$ vector fields on $G$.}
		\end{equation} 

     We denote by $\Iso(G,g)$ the isometry group of $(G,g)$, i.e., the set of diffeomorphisms $ f:G\rightarrow G$ satisfying $f^*g = g$. The isometry group has a natural Lie group structure and its Lie algebra is isomorphic to the Lie algebra of complete Killing vector fields on the manifold $G$. Since homogeneous manifolds are complete, $\Lie(\Iso(G,g))$ encodes all the information about Killing vector fields on our Riemannian manifold $(G,g)$.
	
	It is clear that $G$ is a Lie subgroup of $\Iso(G,g)$ by considering the map $a\in G\mapsto \mathcal{L}_a\in \Iso(G)$. At the Lie algebra level, this injection defines a map $\mg \mapsto \Kil^1(G) $ that can be described as follows: for each $x \in \mg$, we consider the vector field $\xi_{x}\in  \Kil^1(G)$ generated by right-translations of $x$. Namely, for any $u\in G$, $(\xi_{x})_{u} =  (\mathcal{R}_{u})_{*} x = (\mathcal{L}_{u})_{*} \text{Ad}(u^{-1})x$. Using this expression for  $u= \exp(w)\in \exp(\mg)$ we get an expression for the map $\Omega_{\xi_{x}} $ defined in \eqref{mapomeg}, which becomes 
 \begin{multline} \label{eqomg}
     \Omega_{\xi_{x}} (w) = \Ad(\exp(-w))x = e^{-\ad(w)}x
     =   x -[w,x]+ \tfrac{1}{2}[w,[w,x]] - \ldots \in \mg .
 \end{multline}
Note that the left-invariant vector field induced by $x\in\mg$ is Killing if and only if $\ad_x$ is a skew-symmetric map of $\mg$, due to Corollary \ref{cor:dK12}(1).
	
	Denote by $\Auto(G)$ the set of Lie group automorphisms of $G$ which are isometries with respect to $g$. It also has a Lie a group structure and, when $G$ is simply connected, it is isomorphic to $\Auto(\mg) $, the Lie group of automorphism of $\mg$  which are linear isometries. The Lie algebra of the latter is the following
	\[  
   \Dera(\mg):=	  \{ \text{$T\in \so(\mg)$: $T[x,y] = [Tx,y] + [x,Ty]$ for all $x,y$ $\in$ $\mg$}\} .
	\]	
	
 We now describe the procedure to induce Killing vector fields on $G$ from elements in $\Dera(\mg)$. For $T\in \Dera(\mg)$, $e^{tT} \in \Auto(\mg)$ for all $t\in \mathbb{R}$. If $G$ is simply connected, then for each $t$ there exists an isometry $f_t$  of $(G,g)$  such that $(f_t)_{*e} =   e^{tT} $. Therefore, ${(\xi_T)_u = \frac{d}{dt} \Bigr|_{t=0} f_t(u)}$ is a Killing vector field which we call {\em induced by $T$}; note that $f_t$ is the flow of $\xi_T$. If $G$ is not simply connected, and $G=\tilde G/\Gamma$ for $\tilde G$ its universal cover and $\Gamma$ a central discrete subgroup, the Killing vector field $\xi_T$ of $\tilde G$ descends to $G$ if and only if  $f_t(\gamma)=\gamma$  for all $t\in\R$ and $\gamma\in\Gamma$ (i.e.~$\xi_T$ vanishes at each point of $\Gamma$). In this case, we also denote by $\xi_T$ the induced Killing vector field on $G$ and by $f_t$ its flow.

 Let $\xi_T$ be a Killing vector field induced by $T\in \Dera(\mg)$ on $G$.   Considering $u\in G$ of the form $u = \exp(w)$, $w\in\mg$, canonical computations give 
	\[
	(\xi_T)_{\exp(w)} 
	= (\exp)_{*w} (Tw).
	\]
	By \cite[Theorem 1.7, Ch. II]{Hel01}, $(\exp)_{*w} = (\mathcal{L}_{\exp(w)})_{*e}  \Big( \tfrac{1 - e^{-\ad_x}}{\ad_x} \Big)$. Using this expression, the map  \eqref{mapomeg} corresponding to the vector field $\xi_T $ becomes
	\begin{equation}\label{eq:OmegaT}
		\Omega_{T} (w) = T(w) - \tfrac{1}{2}\ad_w T(w)+\tfrac{1}{6}\ad^2_w T(w)-\ldots.
	\end{equation}	For further details in the above computations, we refer the reader to \cite[Section 2]{dBM}.

The Killing vector fields mentioned above take into account the algebraic structure of the Lie group $G$. We thus introduce the following definition.
\begin{definition}    \label{def:algkill}	
On a Lie group $(G,g)$ endowed with a left-invariant metric, Killing vector fields that are right- or left-invariant, induced by skew-symmetric derivations and linear combination of these, are called {\em algebraic Killing tensors}.
\end{definition}

It is worth mentioning that, in general, Lie groups with left-invariant metrics possess Killing vector fields which are not algebraic (see, for instance, \cite{GoWi88}). However, there exist classes of Lie groups for which $\xi_x$ and $\xi_T$, for $x\in\mg$ and $T\in \Dera(\mg)$, span the vector space of Killing fields such as, for instance, the nilpontent and simply connected ones \cite{Wo63}.

		Let $S$ be a  $(k,l)$-tensor field on $G$. If $S$ is left invariant, then $\Omega_S$ is constant in $\mg$. However, if $\Omega_S$ is constant, we only get the identity $S_{\exp(w)}=(\mathcal{L}_{\exp(w)})_* S_e$ for the image of the exponential map.
		The following result shows that if $S$ is a symmetric Killing $p$-tensor, the last identity is enough to conclude that $S$ is left-invariant. 
		
		\begin{proposition}\label{pro:leftinviff}
			On a connected Lie group $G$, a symmetric  Killing $p$-tensor $S$ is left invariant if and only if $\Omega_S$ is a constant function. 
		\end{proposition}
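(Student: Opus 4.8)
The plan is to prove the two implications separately, with the forward one immediate and the converse resting on the unique continuation property of Killing tensors recalled in Remark~\ref{rmrk}. If $S$ is left-invariant, then by definition the map \eqref{mapone}, namely $u\mapsto (\mathcal{L}_{u^{-1}})_* S_u$, is constant; since $\Omega_S$ is exactly this map precomposed with $\exp$ (see \eqref{mapomeg}), it is constant as well. This direction does not use the Killing hypothesis at all.

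For the converse, I would assume $\Omega_S$ is constant and evaluate at $w=0$ to identify the constant value with $S_e$, so that $(\mathcal{L}_{\exp(-w)})_* S_{\exp(w)} = S_e$, equivalently $S_{\exp(w)} = (\mathcal{L}_{\exp(w)})_* S_e$, for every $w\in\mg$. I then introduce the left-invariant $p$-tensor $\tilde S$ determined by $\tilde S_e = S_e$, that is $\tilde S_u = (\mathcal{L}_u)_* S_e$. The previous identity says precisely that $S$ and $\tilde S$ coincide on the image $\exp(\mg)$ of the exponential map, which contains an open neighborhood $U$ of the identity because $\exp$ is a local diffeomorphism at $0\in\mg$.

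The key step is to upgrade $\tilde S$ to a Killing tensor, after which unique continuation concludes. Since the Killing operator $\d$ is a first-order differential operator, hence local, and $S=\tilde S$ on the open set $U$, one gets $\d(\tilde S)=\d(S)=0$ on $U$; in particular $\d(\tilde S)$ vanishes at $e$. As $\tilde S$ is left-invariant, so is $\d(\tilde S)$, and a left-invariant tensor that vanishes at one point vanishes identically. Hence $\d(\tilde S)=0$ on all of $G$, so that $\tilde S\in\Kil^p(G)$.

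It then remains to observe that $S-\tilde S$ is a symmetric Killing $p$-tensor vanishing on the non-empty open set $U$, so that Remark~\ref{rmrk}, which invokes \cite[Theorem~7]{matveev20}, forces $S-\tilde S\equiv 0$ on the connected manifold $G$; thus $S=\tilde S$ is left-invariant. The delicate point, and the only place where the Killing hypothesis on $S$ is genuinely needed, is that a constant $\Omega_S$ controls $S$ only along $\exp(\mg)$, which need not exhaust $G$: bridging this gap from $U$ to all of $G$ is exactly what unique continuation provides.
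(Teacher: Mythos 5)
Your proof is correct and follows essentially the same route as the paper's: you introduce the left-invariant tensor $\tilde S$ with $\tilde S_e=S_e$, observe that constancy of $\Omega_S$ forces $S=\tilde S$ on a neighborhood of the identity, upgrade $\tilde S$ to a global Killing tensor using locality of $\d$ together with left-invariance (the paper phrases this as transporting the local Killing condition by left-translations, which are isometries), and conclude via the unique continuation statement of Remark~\ref{rmrk}. No gaps.
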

		\begin{proof}Let $S$ be a symmetric  Killing $p$-tensor.  As pointed out above, if $S$ is left-invariant then $\Omega_S$ is constant. For the converse, assume $\Omega_S$ is constant and set $K$ the left-invariant symmetric $p$-tensor whose value at $e$ is $\Omega_S(0)=S_e$. Therefore, if $V$ is an open neighborhood of $e$ and $U$ is open in $\mg$ such that $\exp:U\to V$ is a diffeomorphism, we have $S|_V=K|_V$. Indeed, for $x\in V$, $x=\exp w$ for some $w\in U$ and 
			\[
			S_x= (\mathcal{L}_{\exp w})_*\Omega_S(w)=(\mathcal{L}_{\exp w})_*S_e=K_x,
			\]where we used the definitions of $\Omega_S$ and $K$.
			
			Moreover, since $S$ is Killing in $G$, $K$ is Killing in $V$ and thus, by using left-translations (which are isometries), we get that $K$ is a Killing $p$-tensor in $G$.  By Remark \ref{rmrk}, $K=S$ and thus $S$ is left-invariant. 			 
		\end{proof}

		\section{Symmetric Killing tensors in the almost abelian context}\label{sec:almostab}
		
		In this section, we describe the left-invarint symmetric Killing tensors on connected almost abelian Lie groups, by a correspondence with their Lie algebras. 
		
		\begin{definition} \label{amstabel}
			A Lie algebra is called almost abelian if it has an abelian ideal of codimension 1. 
		\end{definition}

		Let $\mg$ be an $(n+1)$-dimensional almost abelian Lie algebra with an inner product $g$, and let $\mh$ an abelian ideal of $\mg$ with codimension 1. We fix $b\in\mg$ satisfying $g(b,b)=1$ and $b \perp \mh$. Since $\mh$ is an ideal, it is preserved by $\ad_b$, and we denote \begin{equation} \label{deriv}
			\begin{array}{crcl}
				D := \ad_b|_{\mh}:& \mh &\rightarrow & \mh \\
				& h &\mapsto &  D(h)=[b,h] .	
			\end{array}
		\end{equation} It is easy to show that the Lie algebra $\mg$ is isomorphic to the semidirect product of $\R b$ and $\mh$ via the representation $\rho:\R b\to {\rm Der}(\mh)$ such that $\rho(b)=D$. We denote this semidirect product as $\R b\ltimes_D\mh$ and we write ${\mg = \mathbb{R} b  {\ltimes_D} \mh}$ from now on; this is the presentation of the almost abelian Lie algebras with which we will be working.
		
		We now fix an orthonormal basis $\{h_{1},\ldots,h_{n}\}$ of $\mh$. Recall that, in Section \ref{sec:skt}, we set $L=2g$ which verifies $L\in \Kil(\mg)$. We define the symmetric 2-tensor ${L}_{\mh} := \sum_{1\leq i \leq n}h_{i}^{2}\in \Sym^2(\mh)$, and we thus get $L = b^{2} + L_{\mh}$. For the next result, we consider $D$ both as an endomorphism of $\mh$ and also as an endomorphism of $\mg$, extending it by zero on $b$.

		\begin{proposition}\label{properties}  \begin{enumerate}
				\item \label{properties1} $\deri(b)=-2S_D=-\tfrac{1}{2}D(L_{\mh})$. In particular, $b \in \Kil^{1}(\mg)$ if and only if $D$ is skew-symmetric.
				\item \label{properties2} For every $K \in \Sym^{p}(\mh)$, $\deri(K) = b \cdot D(K)$. In particular, $K \in \Kil^{p}(\mg)$ if and only if $D(K) = 0$.
			\end{enumerate}	
		\end{proposition}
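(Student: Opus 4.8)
The plan is to treat the two parts separately, in each case reducing to the general formula for $\deri$ on left-invariant tensors established in the previous section, and then exploiting the rigid structure of the almost abelian bracket.

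For part \eqref{properties1}, I would start from Corollary \ref{cor:dK12}(1), which gives $\deri(b) = -2S_{\ad_b}$. The first observation is that, as an endomorphism of $\mg$, $\ad_b$ is exactly the extension of $D$ by zero on $b$: indeed $\ad_b(b)=[b,b]=0$ and $\ad_b|_{\mh}=D$ by \eqref{deriv}, so $\ad_b=D$ and $\deri(b)=-2S_D$. To identify $-2S_D$ with $-\tfrac12 D(L_{\mh})$, I would invoke the identity $E(g)=2S_E$ from \eqref{eq:NSM} with $E=D$; since $g=\tfrac12 L$, $L=b^2+L_{\mh}$, and since $D(b)=0$ forces $D(b^2)=0$ by the Leibniz rule for the derivation \eqref{eq:Mactdef}, this yields $D(L_{\mh})=D(L)=4S_D$, which is the claimed equality. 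Finally, the map $E\mapsto S_E$ is an orthogonal projector with kernel $\Enda(\mg)$, so $\deri(b)=0$ iff $S_D=0$ iff $D\in\Enda(\mg)$; because $D$ is block-diagonal with respect to $\mg=\R b\oplus\mh$ with vanishing $b$-block, this is equivalent to $D$ being skew-symmetric on $\mh$, giving the stated criterion.

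For part \eqref{properties2}, I would apply Proposition \ref{pro:dKinv} using the orthonormal basis $\{b,h_1,\ldots,h_n\}$ of $\mg$ and split the sum as
\[
\deri(K)=b\cdot\ad_b(K)+\sum_{1\le i\le n}h_i\cdot\ad_{h_i}(K).
\]
The heart of the argument—and the only genuinely structural step—is that every term in the second sum vanishes. Since $\mh$ is abelian, $\ad_{h_i}$ annihilates each generator $h_j$ of $\Sym^p(\mh)$; because $\ad_{h_i}$ acts as a derivation of $\Sym^\bullet(\mg)$, it follows by the Leibniz rule that $\ad_{h_i}(K)=0$ for every $K\in\Sym^p(\mh)$. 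Only the $b$-term survives, and since $\ad_b$ restricts to $D$ on $\mh$ and again acts by derivations, $\ad_b(K)=D(K)$; hence $\deri(K)=b\cdot D(K)$.

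It then remains to deduce the Killing criterion. Note that $D(K)\in\Sym^p(\mh)$, so $b\cdot D(K)$ lies in the image of the multiplication map $A\mapsto b\cdot A$ from $\Sym^p(\mh)$ into $\Sym^{p+1}(\mg)$. This map is injective—it sends distinct basis monomials $h_{i_1}\cdots h_{i_p}$ to distinct basis monomials $b\cdot h_{i_1}\cdots h_{i_p}$ of $\Sym^{p+1}(\mg)$—so $\deri(K)=0$ iff $D(K)=0$, as asserted. I do not anticipate any serious obstacle: once the abelian-ness of $\mh$ is used to kill the $\ad_{h_i}$ terms, both parts amount to bookkeeping with the derivation property and the projector $E\mapsto S_E$. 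The only points demanding care are keeping track of the two roles of $D$ (as an endomorphism of $\mg$ versus of $\mh$) and the injectivity needed to pass from $b\cdot D(K)=0$ to $D(K)=0$.
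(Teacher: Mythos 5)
Your proposal is correct and follows essentially the same route as the paper: Corollary \ref{cor:dK12}(1) for part (1), and Proposition \ref{pro:dKinv} combined with the abelian-ness of $\mh$ for part (2). The only cosmetic differences are that you obtain $D(L_{\mh})=4S_D$ via the identity $E(g)=2S_E$ rather than by the direct computation $S_D=\tfrac12\sum_j D(h_j)\cdot h_j$, and that you make explicit the injectivity of multiplication by $b$, which the paper leaves implicit.
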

		\begin{proof}
			By Corollary \ref{cor:dK12}(1), $\d (b)=-2S_{\ad_b}=-2S_D=-\sum_{1\leq j\leq n} D (h_j)\cdot h_j= -\frac12 D(L_\mh)$. 
			
			Now, let $K \in \Sym^{p}(\mh)$; by Proposition \ref{pro:dKinv},  $\deri(K) = b \cdot \ad_b(K)+ {\sum_{1\leq j\leq n}h_j\cdot \ad_{h_j} K}$. Since $K$ is a symmetric tensor in $\mh$, which is abelian, $\ad_{h_j} K=0$ for all $j$, so (2) follows. ~The rest of the proof is immediate since being Killing is equivalent to $\d(K)=0$.
		\end{proof}

		The following proposition provides a useful presentation of symmetric tensors on $\mg$, when interpreted as homogeneous polynomials in $b$. 
		
		\begin{lemma} \label{resulteuclid}
			For every $K \in \Sym^{p}(\mg)$, there exist unique symmetric tensors  \linebreak $\alpha_{i} \in \Sym^{p-2i-1}(\mh)$,  $\beta_{i} \in \Sym^{p-2i}(\mh)$, $i=1, \ldots, \lfloor \tfrac{p}{2} \rfloor$, such that 
			\begin{equation} 
				\label{presentation}
				K = \sum_{0 \leq i \leq   \lfloor \tfrac{p}{2} \rfloor  } L^{i} \cdot q_{i},		\end{equation}
			where $q_{i} := \alpha_{i}\cdot b + \beta_{i}$. By convention we set $\Sym^{-1}(\mh) := 0$.
			\begin{proof} 	We first show uniqueness. Assume that \eqref{presentation} holds.  Then, for every $s=1, \ldots, \lfloor \frac{p}2\rfloor$, 
				$\sum_{0 \leq i \leq   s-1  } L^{i} \cdot q_{i}$  is the reminder of the division of $K$ by $L^s$ in $\R[h_1, \ldots, h_n][b]$. So $q_0$ is unique and, since multiplication by $L$ is injective, all $q_i$ are uniquely determined. 
				
				To show existence, we proceed by induction. For $p = 0$,  $K\in \Sym^{0}(\mg)=\mathbb{R}$.  We have to write $K=L^0\cdot q_0$ with $q_0=\beta_0\in \R$, since $\alpha_0\in \Sym^{-1}(\mh)=0$. But $L^0=1$, so  $\beta_0:=K$ satisfies \eqref{presentation}. 
				
				For $p = 1$, $ K\in \Sym^{1}(\mg)=\mg$  and we have to write $K=L^0\cdot q_0=q_0$ with $q_0=\alpha_0 b+\beta_0$ with $\alpha_0\in \R$ and $\beta_0\in \Sym^1(\mg)$. Since $\mg=\R b\oplus \mh$ and $K\in \mg$, there exist $\alpha_0\in \R$ and $\beta_0\in \mh$ such that $K=\alpha_0 b+\beta_0$. 
				
				Assume that the statement holds for symmetric tensors of degree $p-2\geq0$. Let $ K \in \Sym^{p}(\mg)$. We apply the division algorithm in $\R[h_1, \ldots, h_n][b]$ to write ${K = L \cdot Q + q_{0}}$, where $Q \in \Sym^{p-2}(\mg)$ and  the remainder $q_{0}$ has degree  at most 1 in $b$. By the induction hypothesis, for $i=0,\ldots, {\lfloor\frac{p-2}2\rfloor}$ there exist $\alpha_i'\in \Sym(\mh)^{p-2i-3}, \beta_i'\in \Sym(\mh)^{p-2i-2}$  such that
				\[
				Q =  \sum_{0 \leq i \leq  \lfloor \tfrac{p-2}{2} \rfloor  }  L^{i} \cdot q'_{i}, 
				\]  where $q_i'=\alpha_i' b+\beta_i'$. Therefore, 
				\begin{multline*}
					K 
					=  L \cdot \Bigg( \sum_{0 \leq i \leq  \lfloor \tfrac{p-2}{2} \rfloor  }  L^{i} \cdot q'_{i} \Bigg)	+ q_{0}	   =  \sum_{0 \leq i \leq   \lfloor \tfrac{p}{2} \rfloor -1  }  L^{i+1} \cdot q'_{i} + q_{0}   \\
					=  \sum_{1 \leq j \leq   \lfloor \tfrac{p}{2} \rfloor   } L^{j} \cdot q_{j-1}' + L^{0} \cdot q_{0} =\sum_{0 \leq j \leq   \lfloor \tfrac{p}{2} \rfloor  } L^{j} \cdot q_{j} ,
				\end{multline*}
				where $q_{j} :=  q_{j-1}' $ for $ 1 \leq j \leq  \lfloor \tfrac{p}{2} \rfloor. $  Dividing $q_0$ by $b$ we get $\alpha_0\in \R, \beta_0\in \mh$ such that $q_0=\alpha_0 b+\beta_0$; in addition we take $\alpha_i:=\alpha_{i-1}'$ and $\beta_i:=\beta_{i-1}'$ for $ 1 \leq i \leq  \lfloor \tfrac{p}{2} \rfloor$, and the result follows. 
			\end{proof}
		\end{lemma}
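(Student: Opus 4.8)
The plan is to recognize the symmetric algebra $\Sym^\bullet(\mg)$ as the polynomial ring $\R[h_1,\ldots,h_n,b]$, in which the symmetric product corresponds to polynomial multiplication. Writing $R:=\Sym^\bullet(\mh)=\R[h_1,\ldots,h_n]$, I would regard $\Sym^\bullet(\mg)=R[b]$ as a free $R$-module with basis $\{b^k\}_{k\geq 0}$. The decisive structural fact is that $L=b^2+L_\mh$ is a \emph{monic} polynomial of degree $2$ in the variable $b$ over $R$, so the statement is essentially a graded form of Euclidean division by $L$.

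For existence I would argue by iterated division: since $L$ is monic of $b$-degree $2$, the division algorithm in $R[b]$ yields $K=L\cdot Q+q_0$ with $Q\in R[b]$ and $\deg_b q_0\leq 1$, i.e.\ $q_0=\alpha_0 b+\beta_0$ with $\alpha_0,\beta_0\in R$; applying the same to $Q$ and recursing (equivalently, inducting on $p$, since $\deg Q=p-2$) produces the expansion $K=\sum_i L^i q_i$. A cleaner alternative, which delivers existence and uniqueness simultaneously, is to observe that $\{L^i,\,L^i b\}_{i\geq 0}$ is itself an $R$-basis of $R[b]$: because $L^i=b^{2i}+(\text{lower order in }b)$ and $L^i b=b^{2i+1}+(\text{lower order})$, the transition matrix from $\{b^k\}_{k\geq 0}$ to this family is upper unitriangular, hence invertible over $R$.

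Uniqueness can also be seen directly: $q_0$ is forced to be the remainder of $K$ modulo $L$, hence unique, and since $R[b]$ is an integral domain, multiplication by $L$ is injective, so all higher $q_i$ are determined inductively. It then remains to impose homogeneity. Taking $K\in\Sym^p(\mg)$ homogeneous of degree $p$ and using that $L$ is homogeneous of degree $2$, I would extract the degree-$p$ part of $\sum_i L^i q_i$ to get a second expansion $K=\sum_i L^i q_i^{(p-2i)}$ with coefficients of $b$-degree $\leq 1$; uniqueness then forces $q_i=q_i^{(p-2i)}$, so each $q_i$ is homogeneous of degree $p-2i$. Splitting $q_i=\alpha_i b+\beta_i$ gives $\alpha_i\in\Sym^{p-2i-1}(\mh)$ and $\beta_i\in\Sym^{p-2i}(\mh)$, with the convention $\Sym^{-1}(\mh)=0$ absorbing the degenerate cases; since $\beta_i$ must have non-negative degree, the sum truncates exactly at $i=\lfloor p/2\rfloor$.

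The only genuinely delicate point is this last bookkeeping: one must verify that dividing a homogeneous tensor by the homogeneous element $L$ keeps quotient and remainder homogeneous with the correct degrees, and that the range of $i$ together with the degrees of $\alpha_i,\beta_i$ come out exactly as stated (in particular that $\alpha_i$ may vanish when $p-2i-1<0$). Everything else is the standard monic-division argument, so I expect no real obstacle beyond this degree accounting.
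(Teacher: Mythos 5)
Your proposal is correct and follows essentially the same route as the paper: identify $\Sym^\bullet(\mg)$ with $\R[h_1,\ldots,h_n][b]$, perform Euclidean division by the monic quadratic $L=b^2+L_\mh$ iteratively for existence, and get uniqueness from the remainder modulo $L$ together with injectivity of multiplication by $L$. Your added remark that $\{L^i, L^i b\}_{i\geq 0}$ is an $R$-basis by unitriangularity is a clean way to package the same argument, and your degree bookkeeping matches what the paper does implicitly in its induction.
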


		Consider $K \in \Sym^p(\mg) $ and  write it as in Lemma \ref{resulteuclid}.  From now on, we denote $r:= \lfloor \tfrac{p}{2} \rfloor  $ for convenience.  Then, we can write $K$ as follows
		
		\begin{equation}\label{decomp}
			K= \sum_{0 \leq i \leq r}L^{i}\cdot(\alpha_{i}\cdot b + \beta_{i}) =K^{\text{\normalfont{o}}}+K^{\text{\normalfont{e}}}, \mbox{ where }
			\Ko :=b\cdot   \sum_{0 \leq i \leq r} L^{i}\cdot  \alpha_{i},\;		 \Ke:= \sum_{0 \leq i \leq r} L^{i}\cdot  \beta_{i} .
		\end{equation}
		Notice that $K^{\text{\normalfont{o}}}$  and $K^{\text{\normalfont{e}}}$  have odd and even degree in $b$, respectively.

		\begin{proposition} \label{pro:KoKe} In the notation above, $K$ is Killing if and only if $\Ko$ and $\Ke$ are Killing.
			\begin{proof} Considering $K$ as in \eqref{decomp}, and recalling that $\deri(L)=0$, we use Proposition \ref{properties} to compute $\deri(K)$: 
   \begin{eqnarray*}
					\deri(K)& = &  \deri(\Ko) + \deri(\Ke)     \\
					& = & \Bigg(-2S_D \cdot \sum_{0 \leq i \leq r} L^{i}\cdot  \alpha_{i} + b\cdot \sum_{0 \leq i \leq r} L^{i}\cdot b \cdot D( \alpha_{i} ) \Bigg) + \Bigg(\sum_{0 \leq i \leq r} L^{i}\cdot b \cdot D( \beta_{i} )  \Bigg).
				\end{eqnarray*}   Since $S_D=\tfrac{1}{4}D(L_{\mh})\in \Sym^2(\mh)$, we have that $\deri(\Ko) $  has even degree in $b$, while $\deri(\Ke)$ only odd degree in $b$. Therefore, $\deri(K)=0$ if and only if $ \deri(\Ko) = 0$ and $\deri(\Ke) = 0$.	
			\end{proof}
		\end{proposition}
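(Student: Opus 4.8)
The plan is to prove the nontrivial implication—that $K$ being Killing forces both $\Ko$ and $\Ke$ to be Killing—since the converse follows immediately from the linearity of $\deri$. The guiding idea is to exploit the $\Z/2\Z$-grading of $\Sym^\bullet(\mg)$ given by the parity of the degree in $b$. Writing $\mg = \R b \oplus \mh$, every element of $\Sym^\bullet(\mg)$ splits into a part of even degree in $b$ and a part of odd degree in $b$, and these two subspaces meet only in $0$. By construction in \eqref{decomp}, $\Ke = \sum_{0\le i\le r} L^i\cdot\beta_i$ is even (recall $L = b^2 + L_\mh$, so each $L^i$ carries only even powers of $b$, and each $\beta_i\in\Sym^\bullet(\mh)$ has $b$-degree $0$), while $\Ko = b\cdot\sum_{0\le i\le r} L^i\cdot\alpha_i$ is odd.

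The key step is to verify that $\deri$ reverses this parity. By Proposition \ref{properties}, on the two types of generators of $\Sym^\bullet(\mg)$ we have $\deri(b) = -2S_D = -\tfrac12 D(L_\mh)\in\Sym^2(\mh)$, which has $b$-degree $0$ (even) whereas $b$ is odd, and $\deri(h) = b\cdot D(h)$ for $h\in\mh$, which has $b$-degree $1$ (odd) whereas $h$ is even. In both cases the parity is flipped. Since $\deri$ is a derivation (Leibniz rule), it then flips the parity of every parity-homogeneous element: in a product of parity-homogeneous factors, each Leibniz summand replaces exactly one factor by something of opposite parity, so every summand—and hence the full image—has parity opposite to that of the product. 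Here I would also use $\deri(L)=0$, which guarantees that the powers of $L$ are inert and do not interfere with this bookkeeping.

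Applying the parity reversal to the decomposition $K = \Ko + \Ke$, I get that $\deri(\Ko)$ is even in $b$ while $\deri(\Ke)$ is odd in $b$. Because the even and odd subspaces of $\Sym^{p+1}(\mg)$ intersect only in $0$, the identity $\deri(K) = \deri(\Ko) + \deri(\Ke) = 0$ decouples into the two equations $\deri(\Ko) = 0$ and $\deri(\Ke) = 0$, which is exactly the assertion.

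I do not expect a genuine obstacle here; the argument is essentially bookkeeping of the $b$-degree. The one fact on which the clean parity reversal hinges is that $\deri(b) = -2S_D$ lands in $\Sym^2(\mh)$, i.e.~carries $b$-degree $0$ rather than odd $b$-degree—if $\deri(b)$ instead had odd $b$-degree the two parities would not separate. Thus the main thing to double-check is simply that $S_D\in\Sym^2(\mh)$, which is immediate from $S_D=\tfrac14 D(L_\mh)$ and $D(\mh)\subseteq\mh$.
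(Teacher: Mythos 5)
Your proposal is correct and follows essentially the same route as the paper: both arguments rest on Proposition \ref{properties} (giving $\deri(b)=-2S_D\in\Sym^2(\mh)$ and $\deri(\beta)=b\cdot D(\beta)$ for $\beta\in\Sym^\bullet(\mh)$), on $\deri(L)=0$, and on the observation that $\deri(\Ko)$ has even degree in $b$ while $\deri(\Ke)$ has odd degree, so the two parts of $\deri(K)$ vanish separately. The only difference is presentational: the paper writes out $\deri(\Ko)$ and $\deri(\Ke)$ explicitly and reads off the parities, whereas you derive the parity reversal abstractly from the Leibniz rule.
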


		\begin{theorem} \label{biggestthr}Let $K = \Ko + \Ke\in \Sym^{p}(\mg)$ as above. Then \begin{enumerate}
				\item $\Ke$ is Killing if and only if $\beta_{i}$ is Killing for all $i=0, \ldots, r$.
				\item If $D$ is skew-symmetric, $\Ko$ is Killing if and only if $\alpha_{i}$ is Killing for all $i=0,\ldots, r$.
				\item If $D$ is not skew-symmetric, $\Ko$ is Killing if and only if $\Ko =0$.
			\end{enumerate}	
			\begin{proof} (1) We have $\textstyle \deri(\Ke) = \sum_{0 \leq i \leq r} L^{i}\cdot b \cdot D( \beta_{i} )$, and by the uniqueness guaranteed by Lemma \ref{resulteuclid} (since $ \lfloor \tfrac{p}{2} \rfloor \leq \lfloor \tfrac{p+1}{2} \rfloor  $), we have
\[ \deri(\Ke) = \sum_{0 \leq i \leq r} L^{i}\cdot b \cdot D( \beta_{i} ) = 0 \Leftrightarrow D(\beta_{i}) = 0\ \forall i=0,\ldots, r . \]

(2) If $D$ is skew-symmetric we have that $\deri(b)=0$ by Proposition \ref{properties}\eqref{properties1}, so $\textstyle \deri(\Ko) = \sum_{0 \leq i \leq r} L^{i}\cdot b^2 \cdot D( \alpha_{i} )$, and again $\Ko$ is Killing if and only if $D(\alpha_i)=0$ for every $i=0,\ldots, r$.

(3) Assume now that $D$ is not skew-symmetric, so $S_D$ is a non-zero element in $\Sym^2(\mh)$. We have the following expression for $\deri(\Ko) $: 	
				\begin{eqnarray*} 
					\deri(\Ko)	& = & \sum_{0 \leq i \leq r} L^{i} \cdot \Big(  b^{2}\cdot D(\alpha_{i} ) -2 \alpha_{i} \cdot S_D \Big) \\
					& = & \sum_{0 \leq i \leq r} L^{i} \cdot \Big(  (L-L_{\mh})\cdot D(\alpha_{i} )  -   2\alpha_{i} \cdot S_D   \Big)          \\
					& = &   \sum_{0 \leq i \leq r}  L^{i+1}  \cdot D(\alpha_{i} ) - \sum_{0 \leq i \leq r}   L^{i} \cdot L_{\mh} \cdot D(\alpha_{i} )  -2   \sum_{0 \leq i \leq r} L^{i} \cdot \alpha_{i} \cdot  S_D. 
				\end{eqnarray*} Therefore, 
\begin{eqnarray*}		
\deri(\Ko) & = & L^{r+1} \cdot D(\alpha_{r})  +  \sum_{1 \leq i \leq r} L^{i} \cdot \Big( D(\alpha_{i-1} ) -  L_{\mh} \cdot D(\alpha_{i} )-2\alpha_{i} \cdot  S_D \Big)  
\\					&& + \;  L^{0} \cdot \Bigg( -2\alpha_{0} \cdot S_D -  L_{\mh} \cdot D(\alpha_{0} )    \Bigg)  . 
				\end{eqnarray*}		
				
				Notice that $D(\alpha_{r}) = 0$, since either $p$ is even and $\alpha_{r}\in\Sym^{-1}(\mh) = 0$ or $p$ is odd and $\alpha_{r}\in\Sym^{0}(\mh) = \mathbb{R}$.
			
			Therefore \begin{eqnarray*}
					\deri(\Ko)& = &  \sum_{1 \leq i \leq r} L^{i} \cdot \Big( D(\alpha_{i-1} ) -  L_{\mh} \cdot D(\alpha_{i} ) -2 \alpha_{i} \cdot S_D \Big)  \\
					&  &+  \;L^{0} \cdot\Bigg( -2\alpha_{0} \cdot S_D -  L_{\mh} \cdot D(\alpha_{0} )    \Bigg)  	.
				\end{eqnarray*} Again, by the uniqueness of the decomposition, $\deri(\Ko) = 0$ implies that each coefficient of $L^{i}$ must be zero. Then, the above equality is equivalent to the system
				\begin{equation} \label{system}
					\left\{\begin{array}{rcl}
						0&=&2S_D \cdot \alpha_{0}  +  D(\alpha_{0} ) \cdot  L_{\mh}  \\
						D(\alpha_{i} ) &=& 2  S_D  \cdot  \alpha_{i+1}+  D(\alpha_{i+1}) \cdot  L_{\mh} , \quad i=0, \ldots, r-1.
					\end{array}\right.
				\end{equation}	
				
				We claim that the system \eqref{system} is equivalent to 
				\begin{equation} \label{system2}
					\left\{\begin{array}{rcl}
						\alpha_0 &=& \sum_{i=1}^r (-1)\alpha_i \cdot L_\mh^i\\
						D(\alpha_k)&=&2S_D \cdot \sum_{i=k+1}^r\alpha_i \cdot  L_\mh^{i-k-1}, \quad k=0, \ldots, r.
					\end{array}\right.
				\end{equation}
				In fact, a simple induction argument  using the second  equation in \eqref{system} gives the second equation in \eqref{system2}. In particular, $D(\alpha_0)=2S_D  \cdot \sum_{i=1}^r\alpha_i  \cdot L_\mh^{i-1}$ and using this in the first equation of \eqref{system} gives
				\[
				0=2S_D \cdot \alpha_0+2S_D  \cdot \Bigg(\sum_{i=1}^r\alpha_i \cdot  L_\mh^{i-1} \Bigg) \cdot L_\mh=2S_D  \cdot  \Bigg(\sum_{i=0}^r\alpha_i  \cdot L_\mh^{i-1}\Bigg),
				\]which implies the first equation of \eqref{system2} since $S_D\neq 0$. Finally, one can easily check that \eqref{system2} implies \eqref{system}, thus proving our claim. 
    
				Next we show that \eqref{system2} implies
				\begin{equation}
					\label{eq:alphak}
					\alpha_k=\sum_{i=k+1}^r c_k(i)\alpha_i  \cdot L_\mh^{i-k} \quad \mbox{ with } c_k(i)<0, \mbox{ for all }  k=0, \ldots, r,\, i=k+1, \ldots r.
				\end{equation}
				
				We proceed by induction. For $k=0$ this holds due to the first equation in \eqref{system2}. Assume that \eqref{eq:alphak} is valid for some $k\le r-1$. Then, applying $D$ to \eqref{eq:alphak} and using the fact that $D$ acts as a derivation on tensors, we get
				\begin{eqnarray*}
					D(\alpha_k) &=&\sum_{i=k+1}^r c_k(i)D(\alpha_i)  \cdot L_\mh^{i-k}+\sum_{i=k+1}^r c_k(i)\alpha_i  \cdot D(L_\mh^{i-k})\\
					&=&2S_D \cdot \left(\sum_{i=k+1}^r 
					\sum_{j=i+1}^r c_k(i) \alpha_j  \cdot L_\mh^{j-k-1}
					+2\sum_{i=k+1}^r c_k(i) (i-k)\alpha_i \cdot L_\mh^{i-k -1}\right)\\
					&=&2S_D\cdot\left(\sum_{j=k+2}^r 
					\sum_{i=k+1}^{j-1} c_k(i) \alpha_j  \cdot L_\mh^{j-k-1}
					+2\sum_{i=k+1}^r c_k(i) (i-k) \alpha_i\cdot L_\mh^{i-k -1    } \right)\\
					&=&2S_D \cdot \left(\sum_{j=k+2}^r M_j(k)
					\alpha_j  \cdot L_\mh^{j-k-1}
					+2\sum_{j=k+1}^r c_k(j)(j-k)\alpha_j  \cdot L_\mh^{j-k -1}\right),
				\end{eqnarray*}where $M_j(k)=\sum_{i=k+1}^{j-1} c_k(i)$, $j=k+2, \ldots r$ and, in the second equality, we use the second equation in \eqref{system2} and $ D(L_\mh^{i-k})=4(i-k)S_D \cdot L_\mh^{i-k-1}$ (which follows from a simple induction).
				
				On the other hand, $D(\alpha_k)=2S_D \cdot  \sum_{j=k+1}^r\alpha_j  \cdot L_\mh^{i-k-1}$ by \eqref{system2}, so we obtain
				\begin{equation}
					\sum_{j=k+2}^r M_j(k)
					\alpha_j  \cdot L_\mh^{j-k-1}
					+2\sum_{j=k+1}^r c_k(j) (j-k)\alpha_j  \cdot L_\mh^{i-k -1}= \sum_{j=k+1}^r\alpha_j \cdot L_\mh^{j-k-1},
				\end{equation}
				and, since $c_k(k+1)<0$,  we can isolate $\alpha_{k+1}$ as
				\begin{equation}
					\alpha_{k+1}  = \sum_{j=k+2}^r\frac{1-  M_j(k)-2 c_k(j)(j-k)}{ ( 2c_k(k+1)-1)}\;\alpha_j \cdot L_\mh^{j-k-1}.
				\end{equation}
				Therefore, $\alpha_{k+1}=\sum_{j=k+2}^r c_{k+1}(j)\alpha_j L_\mh^{j-k}$ where 
				\[
				c_{k+1}(j)=\frac{1-  M_j(k)-2 c_k(j)(j-k)}{ (2c_k(k+1)-1)}, \quad j=k+2, \ldots, r.
				\] 
				By induction hypothesis $c_k(j)<0$ for all $i=k+1, \ldots, r$, so $2c_k(k+1)-1<0$,  $M_j(k)=\sum_{i=k+1}^{j-1} c_k(i)<0$ so  $1-  M_j(k)-2 c_k(j)(j-k))>0$ for all $j=k+2, \ldots, r$. Hence $c_{k+1}(j)<0$ for all $j=k+2, \ldots,r$ as we wanted to show. 
				
				Finally, it remains to prove that \eqref{eq:alphak} implies $\alpha_k=0$ for all $k=0, \ldots, r$. This follows immediately by using recursively that equation, indeed,  \eqref{eq:alphak} for $k=r$ implies $\alpha_r=0$. The same equation for $k=r-1$ gives $\alpha_{r-1}=c_{r-1}(r)\alpha_r \cdot L_\mh=0$, and so on. Therefore $\alpha_k=0$ for all $k=0,\ldots, r$ which gives $K^\circ=0$. \end{proof}
		\end{theorem}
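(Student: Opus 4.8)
The plan is to analyze the odd and even parts $\Ko$ and $\Ke$ of the presentation \eqref{decomp} separately, which is justified by Proposition~\ref{pro:KoKe}. Three facts will be used throughout: $\deri(L)=0$ together with the Leibniz rule allows term-by-term differentiation of \eqref{decomp}; Proposition~\ref{properties} evaluates $\deri$ on the building blocks, giving $\deri(\beta)=b\cdot D(\beta)$ for $\beta\in\Sym^\bullet(\mh)$ and $\deri(b)=-2S_D$; and the uniqueness in Lemma~\ref{resulteuclid} is the device that turns an equation $\deri(\cdot)=0$ into separate conditions on the coefficients of the various powers of $L$.

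For (1), I would simply compute $\deri(\Ke)=\sum_i L^i\cdot\deri(\beta_i)=\sum_i L^i\cdot b\cdot D(\beta_i)$. Each summand has degree exactly one in $b$ with $\mh$-component $D(\beta_i)$, so this expression is already in the canonical form of Lemma~\ref{resulteuclid} (in degree $p+1$, and $\lfloor p/2\rfloor\le\lfloor(p+1)/2\rfloor$). Uniqueness then makes $\deri(\Ke)=0$ equivalent to $D(\beta_i)=0$ for all $i$, i.e.\ to each $\beta_i$ being Killing. For (2), skew-symmetry of $D$ forces $S_D=0$, hence $\deri(b)=0$; writing $\Ko=b\cdot\widetilde K$ with $\widetilde K=\sum_i L^i\cdot\alpha_i$, the Leibniz rule gives $\deri(\Ko)=b\cdot\deri(\widetilde K)$, and since multiplication by $b$ is injective on $\Sym^\bullet(\mg)$, $\Ko$ is Killing iff $\widetilde K$ is. As $\widetilde K$ has the same shape as an even tensor with $\mh$-coefficients, the computation of part (1) applies verbatim and shows this is equivalent to each $\alpha_i$ being Killing.

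Part (3) carries the real content, because now $S_D\ne0$ and the term $\deri(b)=-2S_D$ interacts with the $\mh$-derivative. Differentiating $\Ko$, substituting $b^2=L-L_{\mh}$, and re-collecting everything as a polynomial in $L$ with coefficients in $\Sym^\bullet(\mh)$, uniqueness converts $\deri(\Ko)=0$ into the coupled system
\[
2S_D\cdot\alpha_0=-L_{\mh}\cdot D(\alpha_0),\qquad D(\alpha_i)=2S_D\cdot\alpha_{i+1}+L_{\mh}\cdot D(\alpha_{i+1})\quad(0\le i\le r-1),
\]
where $r=\lfloor p/2\rfloor$ and $D(\alpha_r)=0$ since $\alpha_r\in\Sym^{-1}(\mh)=0$ or $\Sym^0(\mh)=\R$. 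The goal is to show this system admits only $\alpha_i=0$. I would work in $\Sym^\bullet(\mh)\cong\R[h_1,\dots,h_n]$, an integral domain in which the nonzero elements $S_D$ and $L_{\mh}$ are non-zero-divisors, so they may be cancelled. A backward induction on the recursion first yields the closed form $D(\alpha_k)=2S_D\cdot\sum_{i>k}\alpha_i\cdot L_{\mh}^{\,i-k-1}$; feeding this into the first equation and cancelling $2S_D$ gives $\alpha_0=-\sum_{i\ge1}\alpha_i\cdot L_{\mh}^{\,i}$. Building on this, I would prove by induction the relations $\alpha_k=\sum_{i>k}c_k(i)\,\alpha_i\cdot L_{\mh}^{\,i-k}$ with all $c_k(i)<0$, by applying $D$, using $D(L_{\mh}^{\,m})=4m\,S_D\cdot L_{\mh}^{\,m-1}$, reinserting the recursion, cancelling $S_D$, and isolating $\alpha_{k+1}$. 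Finally a downward recursion collapses everything: the relation for $k=r$ reads $\alpha_r=0$ (empty sum), whence $\alpha_{r-1}=0$, and so on, giving $\Ko=0$.

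The main obstacle is precisely the sign-tracking in that last induction. Without controlling the signs, the elimination could in principle loop back on itself and fail to pin the $\alpha_i$ down; the argument works because each new coefficient $c_{k+1}(j)$ is a quotient whose denominator $2c_k(k+1)-1$ is strictly negative and whose numerator is strictly positive (the latter using the previous negativity of the $c_k(i)$ and the strict positivity of the integers $i-k$ produced when $D$ differentiates powers of $L_{\mh}$). This strict negativity is what both legitimizes the cancellations and guarantees the downward recursion terminates with $\Ko=0$; it relies essentially on the standing hypothesis $S_D\ne0$.
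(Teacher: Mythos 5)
Your proposal is correct and follows essentially the same route as the paper: term-by-term differentiation via $\deri(L)=0$ and Proposition~\ref{properties}, uniqueness from Lemma~\ref{resulteuclid} to separate the coefficients of powers of $L$, the substitution $b^2=L-L_{\mh}$ leading to the same coupled system, and the same sign-controlled induction $\alpha_k=\sum_{i>k}c_k(i)\,\alpha_i\cdot L_{\mh}^{\,i-k}$ with $c_k(i)<0$ followed by the downward collapse. The only (harmless) variations are your explicit appeal to integrality of $\Sym^\bullet(\mh)$ to justify cancelling $S_D$ and $L_{\mh}$, and the factoring $\Ko=b\cdot\widetilde K$ in part (2), both of which the paper does implicitly.
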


      In the next example, we describe the decomposition in Lemma \ref{resulteuclid} and the characterization of the Killing condition in Theorem \ref{biggestthr} for the particular case $p=2$.
		
		\begin{example} Consider $K$ an element in $\Sym^{2}(\mg)$ written in the orthonormal basis $\{b,h_1, \ldots, h_n\}$ of $\mg$ as
			\[
			K=\lambda b^2+v\cdot b+ \sum_{1\leq i\leq j\leq n}a_{ij}h_i\cdot h_j,
			\] for some $\lambda,a_{ij}\in \R$, $i,j=1, \ldots, n$ and $v\in \mh$.
			Setting $\beta_0:=\sum_{1\leq i\leq j\leq n}(a_{ij}-\lambda \delta_{ij})h_i\cdot h_j$, $\alpha_0:=v$ and $\beta_1:=\lambda$ we get the decomposition as in  Lemma \ref{resulteuclid}:
			\begin{equation}\label{expressK}
				K =   L\cdot \beta_{1} + \alpha_{0}\cdot b + \beta_{0} .
			\end{equation} 			
			
			In this case, the odd and even parts of $K$ are, respectively,  $\Ko=\alpha_{0}\cdot b$ and $\Ke= L\cdot \beta_{1} +  \beta_{0} $. By Proposition \ref{pro:KoKe} and Theorem \ref{biggestthr}, $K$ is Killing if and only if $\beta_{1} $, $\alpha_{0}$ and $\beta_{0}$ are Killing. Since these are symmetric tensors in $\mh$,  the Killing condition is equivalent to being annihilated by $D$ due to Corollary \ref{properties2}(2). That is:
			\begin{itemize}
				\item $\beta_{1}\in \mathbb{R}$, then $D(\beta_{1}) = 0$ automatically, as mentioned in Section \ref{sec:skt};
				\item $\alpha_0\in\mh$, then  $D(\alpha_{0}) = 0$ if and only if $\alpha_{0} \in \Ker(D)$;		 
				\item  Viewing $\beta_{0}$ as a symmetric endomorphism of $\mh$, by  \eqref{eq:NSM} we have $D(\beta_{0}) = 2S_{D\circ \beta_{0}}$. Then $D(\beta_{0}) = 0 $ if and only if $D\circ \beta_{0}$ is a skew-symmetric endomorphism of $\mh$. 
			\end{itemize}	Therefore, $K$ in \eqref{expressK} is a symmetric Killing 2-tensor in $\mg$ if and only if $\alpha_0\in \Ker(D)$ and $D\circ \beta_0$ is skew-symmetric. 			
		\end{example}

		\section{Decomposability}\label{sec:decomp}

		We maintain the notation of the previous section so that $\mg$ denotes an $(n+1)$-dimensional almost abelian Lie algebra, which is isomorphic to $\mg = \mathbb{R} b  {\ltimes_D} \mh$, where $\mh$ is an abelian ideal and $D={\ad_b}|_\mh$. We fix an orthonormal basis $\{h_{1}, \ldots, h_{n} \}$  of $\mh$  with respect to the inner product $g$, and let $G$ be a connected Lie group with Lie algebra $\mg$. 
		
		In this section, we address the problem of decomposability of  left-invariant symmetric Killing tensors defined on $G$, in terms of Definition \ref{defdecomp}. Recall that left-invariant symmetric $p$-tensors on $G$ are identified with elements on $\Sym^p(\mg)$.

		We start by describing the Killing vector fields in $G$ via their associated function defined in \eqref{mapomeg}. For each $h_{i}$ in the fixed basis, we compute $\Omega_{\xi_{h_{i}}}$, where $\xi_{h_i}$ is the right-invariant vector field induced by $h_i$ as in  \eqref{eqomg}. For $w = \gamma b + h \in \mg$, $\gamma\in \mathbb{R}$ and $h \in \mh$, one has ${\ad_{w}^{j}(h_{i}) = \gamma^{j} D^{j} (h_{i})}$ for all $j\geq 0$, $i=1, \ldots, n$. Indeed: \[\ad_{w}(h_{i}) = [ \gamma b + h,h_{i}] = \gamma D(h_{i}) \]
		and, assuming the equality holds for $j$, we have \[ \ad_{w}^{j+1}(h_{i}) = [w, \ad_{w}^{j}(h_{i})] =  [ \gamma b + h,\gamma^{j} D^{j}(h_{i})] = \gamma^{j+1}D^{j+1}(h_{i}).  \] Using this expression in  \eqref{eqomg}, we get for all $i=1, \ldots, n$,
        \begin{equation}\label{Omegahi}
		\Omega_{\xi_{h_{i}}}(\gamma b + h) = e^{-\ad_w} (h_{i}) = \sum_{k\geq0}\tfrac{(-\gamma)^k}{k!}D^k(h_i) = e^{-\gamma D }(h_{i}).    \end{equation} Similarly, $\ad_{w}^n(b) =  -\gamma^{n-1} D^n(h)$ and then
        \begin{equation} \label{Omegab}
	\Omega_{\xi_{b}} (\gamma b + h) =  b + \sum_{k\geq1}\tfrac{(-\gamma)^{k-1}}{k!}D^k(h).\end{equation} 
 
    	We will now investigate skew-symmetric derivations of $\mg$. Given $T \in \so(\mg)$, we define $v:= T(b)$ and $T^{\mh} = \pr_{\mh} T|_\mh $, the projection to $\mh$ of the restriction $T|_\mh$ of $T$ to $\mh$. Then $g( b,T(b)  ) = 0$ implies $v\in\mh$ and we can write
	  \begin{equation}\label{eq:Tbh}
	      T (\gamma b+h)= \gamma T(b)+ T(h) = \gamma v + g( b,T(h) ) b + T^{\mh}(h)  =  \gamma v - g( v, h ) b  +  T^{\mh}(h) .
	  \end{equation}   
   Using the notation
   \[(v\wedge w)(x):=g(v,x)w-g(w,x)v , \quad \forall v,w,x\in \mg,\]the above relation reads $T = b\wedge v + T^\mh$.
    Straightforward computations show that  $T$ is a derivation of the Lie algebra $\mg$ if and only if  
	\begin{equation}\label{Tderivati}
		\left\{  \begin{array}{l}			
			\left[ T^{\mh} , D \right] =0\\
   \Im D\subset v^\bot\cap \mh\\
         |v|^2 \,D(v^\bot)=0
		\end{array} \right. 
	\end{equation} Notice that $D^2\neq 0$ implies $v =0$ due to the last two equations.
 
In addition, following and inductive argument, one can show 
	 \begin{equation}\label{adnT}
	 	\ad_w^{k} T(w) =  D^k( \gamma^{k+1} v + \gamma^k T^{\mh}(h) + \gamma^{k-1} g( v, h ) h   ) , \quad \forall w = \gamma b + h \in \mg,\;n\geq 1.
	 \end{equation}
Using this formula, the function $\Omega_{T}$ defined in \eqref{eq:OmegaT} has the following expression:
	\begin{multline}\label{OmegaTt}
		\Omega_{T}(\gamma b + h)  =  \gamma v +  T^{\mh}(h) - g( v, h ) b  -\tfrac{1}{2} D(\gamma^2v + \gamma T^{\mh}(h) +  g( v, h ) h ) \\  + \sum_{k\geq 2}\tfrac{(-1)^k}{(k+1)!}D^k(\gamma^{k+1} v + \gamma^k T^{\mh}(h) + \gamma^{k-1} g( v, h ) h  )  .
	\end{multline}

		In terms of Lie group 
		actions, $\GL(\mg)$ acts on $\Sym^{p}(\mg)$ as follows: for $A\in \GL(\mg)$ and $v_{1}\cdot\ldots\cdot v_{p}\in \Sym^{p}(\mg)$,  \begin{equation}\label{action}
			A(v_{1}\cdot\ldots\cdot v_{p}) = A(v_{1})\cdot\ldots\cdot A(v_{p}).  
		\end{equation} 
		
		We claim that for any  $D\in \gl(\mg)$,  $K\in \Sym^{p}(\mg)$ and $\gamma\in \mathbb{R}$, the action of $e^{-\gamma D}\in \GL(\mg)$ on $K$ satisfies
		\[		e^{-\gamma D}(K)   = \sum_{j\geq 0} \tfrac{(-\gamma)^j  }{j!} D^j(K)   =    K - \gamma D(K) + \tfrac{\gamma^{2}}{2}D^{2}(K)- \ldots,     \]where $D(K)$ is the action of $D$ on $K$ given in \eqref{eq:Mactdef}.
		
		Indeed, the action of $e^{-\gamma D}\in \GL(\mg)$ on a symmetric 2-tensor $v_{1}\cdot v_{2}\in \Sym^{2}(\mg)$ verifies \begin{eqnarray*}
			e^{-\gamma D}(v_{1}\cdot v_{2})	& \overset{\eqref{action}}{=} &e^{-\gamma D}(v_{1})\cdot e^{-\gamma D} (v_{2})     \\
			& = &  \Big(\sum_{i\geq 0} \tfrac{(-1)^{i}}{i!}\gamma^{i}D^{i}(v_{1})\Big) \cdot \Big( \sum_{j\geq 0} \tfrac{(-1)^{j}}{j!}\gamma^{j}D^{j}(v_{2}) \Big)  \\
			& = & \sum_{t\geq 0}(-1)^{t}\gamma^{t} \sum_{\substack{i,j\geq 0 \\ i+j = t}} \tfrac{1}{i!j!}D^{i}(v_{1})\cdot D^{j}(v_{2}) \\
			& = & \sum_{t\geq 0}(-1)^{t}\gamma^{t} \tfrac{1}{t!}D^{t}(v_{1}\cdot v_{2})    \\
			& = &   v_{1}\cdot v_{2} - \gamma D(v_{1}\cdot v_{2}) + \tfrac{\gamma^{2}}{2}D^{2}(v_{1}\cdot v_{2})- \ldots   .
		\end{eqnarray*} Our claim follows from an inductive argument for symmetric $p$-tensors. 
		
		The following result shows that any left-invariant symmetric Killing tensor $K$ on $\mg$  that verifies $K\in \Sym^p(\mh)$ is decomposable. More precisely:

		\begin{proposition} \label{propdecomp} Let $K \in \Sym^{p}(\mh) \cap\Kil^{p}(\mg)$ written as above. Then, $K$ can be written as a polynomial in right-invariant vector fields of $G$. 
			\begin{proof} Let $K$ be a symmetric Killing $p-$tensor on $\mg$ that verifies $K\in \Sym^p(\mh)$. For any such tensor, we have
		\[K =  \tfrac{1}{p!} \sum_{1 \leq i_{1}, \ldots, i_{p} \leq n} K(h_{i_{1}}, \ldots  h_{i_{p}}   )  h_{i_{1}} \cdot \ldots \cdot h_{i_{p}}.\]
		
		Using the components of $K$ above, we define the following symmetric Killing $p$-tensor on $G$
		\[S:=
		{ \tfrac{1}{p!} \sum_{1 \leq i_{1}, \ldots, i_{p} \leq n} K(h_{i_{1}}, \ldots  h_{i_{p}}   )  \xi_{h_{i_{1}}} \cdot \ldots  \cdot \xi_{h_{i_{p}}} },
		\]where $\xi_{h_i}$ is the right-invariant Killing vector field induced by $h_i$. Notice that, in principle, $S$ is not necessarily left-invariant. Next we show that this is in fact the case.

   We consider the function $\Omega_S$ defined in \eqref{mapomeg}, and we evaluate it on an arbitrary element $\gamma b + h$ of $\mg$, where $\gamma\in \R$ and $h\in\mh$. We obtain:
			\begin{eqnarray*}			    
				\Omega_S(\gamma b+h)&=& \tfrac{1}{p!} \sum_{1 \leq i_{1}, \ldots, i_{p} \leq n} K(h_{i_{1}}, \ldots  h_{i_{p}}   )  \Omega_{ \xi_{h_{i_{1}}} \cdot \ldots  \cdot \xi_{h_{i_{p}}}}( \gamma b + h) 
				\\ &=& \tfrac{1}{p!} \sum_{1 \leq i_{1}, \ldots, i_{p} \leq n} K(h_{i_{1}}, \ldots  h_{i_{p}}   ) \Omega_{ \xi_{h_{i_{1}}}}( \gamma b + h) \cdot \ldots  \cdot \Omega_{\xi_{h_{i_{p}}}}( \gamma b + h) 
				\\    &=& \tfrac{1}{p!} \sum_{1 \leq i_{1}, \ldots, i_{p} \leq n} K(h_{i_{1}}, \ldots  h_{i_{p}}   )   e^{-\gamma D}( h_{i_{1}} \cdot \ldots \cdot h_{i_{p}}  ) 
				\\ &=& e^{-\gamma D} \Bigg( \tfrac{1}{p!} \sum_{1 \leq i_{1}, \ldots, i_{p} \leq n} K(h_{i_{1}}, \ldots  h_{i_{p}}   )  h_{i_{1}} \cdot \ldots \cdot h_{i_{p}}   \Bigg)\\
				  &=& e^{-\gamma D}(K) = K,    
    			\end{eqnarray*}
			where the last equality holds since $K \in \Sym^{p}(\mh)$ is Killing in $\mg$ and thus $D(K)=0$ by Proposition \ref{properties}\eqref{properties2}.

			This shows that $\Omega_S$ is constant in $\mg$ and equal to the the symmetric $p$-tensor $K$.
			Therefore, by Proposition \ref{pro:leftinviff}, the tensor field $S$ on $G$ is left-invariant. Since it coincides with $K$ at $e$, we obtain that $S$ is equal to the (left-invariant) tensor field $K$ on $G$, and thus $K$ is a polynomial in right-invariant vector fields in $G$ as claimed. 
		\end{proof}
	\end{proposition}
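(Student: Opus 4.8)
The plan is to build, directly from $K$, a symmetric product of right-invariant Killing vector fields on $G$, and then to prove that this product is actually left-invariant with value $K$ at the identity. First I would expand $K \in \Sym^p(\mh)$ in the orthonormal basis $\{h_1, \ldots, h_n\}$ of $\mh$, writing $K = \tfrac{1}{p!} \sum K(h_{i_1}, \ldots, h_{i_p})\, h_{i_1} \cdots h_{i_p}$, and replace each factor $h_{i_j}$ by the right-invariant Killing vector field $\xi_{h_{i_j}}$ to obtain a candidate tensor field $S := \tfrac{1}{p!}\sum K(h_{i_1}, \ldots, h_{i_p})\, \xi_{h_{i_1}} \cdots \xi_{h_{i_p}}$ on $G$. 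By construction $S$ is a polynomial in right-invariant vector fields, and it is automatically Killing, since each $\xi_{h_i}$ is Killing and $\deri$ obeys the Leibniz rule; the real content of the statement is that $S$ coincides with the left-invariant tensor field extending $K$.

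The key tool is Proposition \ref{pro:leftinviff}, which reduces left-invariance of the Killing tensor $S$ to the constancy of the associated function $\Omega_S$. To compute $\Omega_S$ I would use the multiplicativity \eqref{omegvf} of $\Omega$ on symmetric products together with the explicit formula $\Omega_{\xi_{h_i}}(\gamma b + h) = e^{-\gamma D}(h_i)$ from \eqref{Omegahi}. Substituting these into the definition of $S$ and invoking the multiplicativity of the $\GL(\mg)$-action on $\Sym^p(\mg)$ established just above, the sum collapses to $\Omega_S(\gamma b + h) = e^{-\gamma D}(K)$, where $e^{-\gamma D}$ acts as the derivation extension $\sum_{j \geq 0} \tfrac{(-\gamma)^j}{j!} D^j$.

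It then remains to evaluate $e^{-\gamma D}(K)$, and this is where the Killing hypothesis enters decisively: since $K \in \Sym^p(\mh)$ is Killing, Proposition \ref{properties}\eqref{properties2} gives $D(K) = 0$, hence $D^j(K) = 0$ for every $j \geq 1$, so $e^{-\gamma D}(K) = K$ for all $\gamma$. Thus $\Omega_S$ is the constant function equal to $K$, and Proposition \ref{pro:leftinviff} forces $S$ to be left-invariant. Since $S_e = \Omega_S(0) = K$, the left-invariant tensor $S$ coincides with the left-invariant extension of $K$, so $K = S$ is the desired polynomial in right-invariant vector fields.

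The point requiring care — rather than a genuine obstacle — is that the substitution producing $S$ gives no a priori reason for $S$ to be left-invariant, so one cannot simply declare $S = K$ at once; the entire argument hinges on proving constancy of $\Omega_S$. This in turn rests on two inputs: the identification of $\Omega_{\xi_{h_i}}$ with the exponential $e^{-\gamma D}$, which is special to the almost abelian structure, and the vanishing $D(K) = 0$ furnished by the Killing condition. Once both are in place, the collapse of $\Omega_S$ to the constant $K$ is essentially formal.
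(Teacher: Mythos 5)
Your proposal is correct and follows essentially the same route as the paper: defining $S$ by substituting the right-invariant Killing fields $\xi_{h_i}$ into the basis expansion of $K$, collapsing $\Omega_S$ to $e^{-\gamma D}(K)$ via \eqref{omegvf} and \eqref{Omegahi}, using $D(K)=0$ from Proposition \ref{properties}\eqref{properties2} to get constancy, and concluding with Proposition \ref{pro:leftinviff}. No gaps.
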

	
	We now prove our main result: every left-invariant symmetric Killing tensor on an almost abelian Lie group is decomposable.

	\begin{theorem} \label{teo:main} Let $G$ be an almost abelian Lie group, equipped with a left-invariant Riemannian metric. Then, every left-invariant symmetric Killing $p$-tensor on $G$ is a polynomial in the metric and right- and left-invariant Killing vector fields.
	\end{theorem}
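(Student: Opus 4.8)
The plan is to exploit the splitting $K=\Ko+\Ke$ from \eqref{decomp} together with the characterizations already established, so that the theorem reduces to an assembly of Proposition \ref{pro:KoKe}, Theorem \ref{biggestthr} and Proposition \ref{propdecomp}. First I would invoke Proposition \ref{pro:KoKe}: since $K$ is Killing if and only if both $\Ko$ and $\Ke$ are Killing, and since a sum of decomposable tensors is again decomposable, it suffices to prove that each of $\Ko$ and $\Ke$ separately is a polynomial in the metric and the right- and left-invariant Killing vector fields.

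For the even part $\Ke=\sum_{0\le i\le r}L^{i}\cdot\beta_{i}$, Theorem \ref{biggestthr}(1) gives $D(\beta_{i})=0$ for every $i$, so each $\beta_{i}\in\Sym^{p-2i}(\mh)\cap\Kil^{p-2i}(\mg)$. Proposition \ref{propdecomp} then expresses each $\beta_{i}$ as a polynomial in the right-invariant Killing vector fields $\xi_{h_{j}}$. Since $L=2g$ is twice the metric, each summand $L^{i}\cdot\beta_{i}$, and hence $\Ke$ itself, is a polynomial in the metric and the right-invariant Killing vector fields.

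For the odd part $\Ko=b\cdot\sum_{0\le i\le r}L^{i}\cdot\alpha_{i}$ I would distinguish the two cases of Theorem \ref{biggestthr}. If $D$ is not skew-symmetric, Theorem \ref{biggestthr}(3) forces $\Ko=0$, which is trivially decomposable. If $D$ is skew-symmetric, then Proposition \ref{properties}(1) shows that the left-invariant vector field $b$ is itself a Killing vector field, while Theorem \ref{biggestthr}(2) gives $D(\alpha_{i})=0$, so each $\alpha_{i}\in\Sym^{p-2i-1}(\mh)\cap\Kil^{p-2i-1}(\mg)$ is, by Proposition \ref{propdecomp}, a polynomial in the right-invariant Killing vector fields. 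Thus $\sum_{i}L^{i}\cdot\alpha_{i}$ is a polynomial in the metric and the right-invariant Killing vector fields, and multiplying by the left-invariant Killing vector field $b$ shows that $\Ko$ is a polynomial in the metric and the right- and left-invariant Killing vector fields. Combining the two parts yields the claim.

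There is essentially no computational obstacle remaining at this stage: all the analytic work has been front-loaded into Theorem \ref{biggestthr} (the forced vanishing of the odd part when $D$ is not skew-symmetric) and into Proposition \ref{propdecomp} (the realization of tensors in $\Sym^{\bullet}(\mh)$ as polynomials in right-invariant Killing vector fields). The only point requiring care is the bookkeeping of which primitive tensors are used: the even part and the coefficients $\alpha_{i}$ contribute right-invariant Killing vector fields, the factors $L^{i}$ contribute the metric, and the single factor $b$ in the odd part contributes a left-invariant Killing vector field — the latter being Killing precisely because skew-symmetry of $D$ is exactly the hypothesis under which $\Ko$ can fail to vanish. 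This dichotomy is what makes both the right- and the left-invariant vector fields appear in the statement.
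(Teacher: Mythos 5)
Your proposal is correct and follows essentially the same route as the paper: split $K=\Ko+\Ke$ via Lemma \ref{resulteuclid}, reduce to the coefficients $\alpha_i,\beta_i$ using Proposition \ref{pro:KoKe} and Theorem \ref{biggestthr}, express them as polynomials in right-invariant Killing vector fields via Proposition \ref{propdecomp}, and handle the odd part by the dichotomy on whether $D$ is skew-symmetric. The bookkeeping of which primitives appear (metric from $L^i$, right-invariant fields from the coefficients, the left-invariant field $b$ only when $D$ is skew-symmetric) matches the paper's argument exactly.
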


	\begin{proof} Denote by $\mg$ the Lie algebra of $G$. Consider $K\in\Kil^{p}(\mg)$ written $K=\Ko + \Ke$ as in \eqref{decomp}:
		\[K= K^{\text{\normalfont{o}}}+K^{\text{\normalfont{e}}}, \mbox{ where }
		\Ko :=b\cdot   \sum_{0 \leq i \leq \lfloor \tfrac{p}{2} \rfloor  } L^{i}\cdot  \alpha_{i},\;		 \Ke:= \sum_{0 \leq i \leq \lfloor \tfrac{p}{2} \rfloor  } L^{i}\cdot  \beta_{i} ,\] with 
		$\alpha_i\in \Sym^{p-2i-1}(\mh)$ and $\beta_i\in \Sym^{p-2i}(\mh)$.
		Since $K$ is Killing, $\alpha_i$ and $\beta_{i}$ are Killing for every $0\leq i \leq  \lfloor \tfrac{p}{2} \rfloor $,  
		by Proposition \ref{pro:KoKe} and Theorem \ref{biggestthr}.
		
		By Proposition \ref{propdecomp}, for each $i=0, \ldots \lfloor\frac{p}2\rfloor$, $\alpha_i$ and $\beta_i$ can be written as a linear combination of symmetric products of Killing vector fields. 
		Since $L = 2g \in \Kil(\mg)$ we get directly that $\Ke$ is decomposable. We claim that $\Ko$ is decomposable as well. 
		
		Indeed, when $D$ is not skew-symmetric then $\Ko=0$ by Theorem \ref{biggestthr}, so the claim holds trivially. For $D$ skew-symmetric, the left-invariant vector field determined by $b$ is a Killing vector field by Proposition \ref{properties}, and thus $\Ko$ is a polynomial in $L$ and Killing vector fields, i.e. it is decomposable in terms of Definition \ref{defdecomp}.
	\end{proof}

\section{Constant sectional curvature}\label{sec:spaceforms}
In this section, we focus on connected almost abelian Lie groups endowed with  left-invariant metrics that have constant sectional curvature. Almost abelian Lie groups are in particular solvable, so the Riemannian manifolds under consideration are either flat or negatively curved space forms. 

The motivation to study this particular case is the well known result of Thompson and Takeuchi \cite{takekilli,Thomps}: on a connected Riemannian manifold of constant sectional curvature, any symmetric Killing $p$-tensor can be written as a polynomial in Killing vector fields. 

Our aim is to analyze whether it is possible to refine Thompson and Takeuchi's result for almost abelian Lie groups with constant sectional curvature, in terms of their algebraic structure, i.e. using their algebraic Killing vector fields (see Definition \ref{def:algkill}).  More precisely, we ask the following:
\begin{question}
\label{q:question}Let $(G,g)$ be a connected almost abelian Lie group with a left-invariant metric of constant sectional curvature. 
Is every left-invariant Killing $p$-tensor on $G$ a polynomial in algebraic Killing vector fields? 
\end{question}

Note that if the answer is affirmative for a connected Lie group, then it is so also  for its universal cover. 

Let $(G,g)$ be a connected almost abelian Lie group endowed with a left-invariant Riemannian metric, and let $\mg=\R b\ltimes_D \mh$ be its Lie algebra, where $D=\ad_b|_{\mh}$. We maintain the notation of the previous sections. Then, $(G,g)$ has constant sectional curvature if and only if $D=\lambda \Id+A$, where $A$ is a skew-symmetric endomorphism of $\mh$ (see \cite[Theorem 1.5]{Mil76} and \cite[Theorem 4.2]{thompmetricsliegr}). Moreover, it is flat if and only if $\lambda=0$.

\begin{proposition}\label{pro:flat}
    If $(G,g)$ is flat then every left-invariant symmetric Killing $p$-tensor can be written as a polynomial in left- and right-invariant Killing vector fields.
\end{proposition}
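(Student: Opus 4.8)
The plan is to obtain Proposition \ref{pro:flat} as a refinement of the main decomposition result, Theorem \ref{teo:main}. That theorem already expresses every left-invariant symmetric Killing tensor as a polynomial in the metric $g$ together with left- and right-invariant Killing vector fields, so the only thing to be done is to \emph{eliminate the metric}: it suffices to exhibit $g$ itself, equivalently $L=2g$, as a polynomial in left- and right-invariant Killing vector fields. Once such an expression is available, substituting it into the decomposition furnished by Theorem \ref{teo:main} immediately yields the statement.

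To produce this expression I would exploit flatness. By the criterion recalled above, $(G,g)$ flat forces $\lambda=0$, so $D=\ad_b|_{\mh}$ is skew-symmetric; this is the only input needed. I would then split $L=b^2+L_{\mh}$ and treat the two summands separately. For the first, skew-symmetry of $D$ gives $\deri(b)=-2S_D=0$ by Proposition \ref{properties}\eqref{properties1}, so the left-invariant vector field determined by $b$ is Killing, and hence $b^2=b\cdot b$ is already a symmetric product of left-invariant Killing vector fields. For the second, $L_{\mh}=\sum_i h_i^2\in\Sym^2(\mh)$ is Killing in $\mg$, since $D(L_{\mh})=4S_D=0$ by Proposition \ref{properties}\eqref{properties2}; Proposition \ref{propdecomp} then guarantees that $L_{\mh}$ is a polynomial in right-invariant Killing vector fields. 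Concretely, I expect the clean identity $L_{\mh}=\sum_i \xi_{h_i}^2$, which I would verify through the function $\Omega$: using \eqref{omegvf} and \eqref{Omegahi},
\[
\Omega_{\sum_i \xi_{h_i}^2}(\gamma b+h)=\sum_i\big(e^{-\gamma D}(h_i)\big)^2=L_{\mh},
\]
the last equality holding because $e^{-\gamma D}$ is an orthogonal transformation of $\mh$ (again by skew-symmetry of $D$), so $\{e^{-\gamma D}(h_i)\}_i$ is an orthonormal basis of $\mh$ and the sum of squares of any orthonormal basis is the basis-independent tensor $L_{\mh}$. Since $\Omega$ is constant and $\sum_i\xi_{h_i}^2$ is a product of Killing fields, Proposition \ref{pro:leftinviff} identifies it with the left-invariant tensor $L_{\mh}$.

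Combining the two pieces gives $L=b\cdot b+\sum_i\xi_{h_i}^2$, a polynomial in left- and right-invariant Killing vector fields, hence so is $g=\tfrac12 L$; feeding this back into Theorem \ref{teo:main} completes the proof. The argument is essentially a bookkeeping reduction to Theorem \ref{teo:main}, and I do not expect any serious obstacle: the whole content is the pair of observations that flatness (skew-symmetry of $D$) simultaneously makes $b$ a Killing vector field and makes $e^{-\gamma D}$ orthogonal. The one mildly delicate point to get right is the identity $L_{\mh}=\sum_i\xi_{h_i}^2$, namely that replacing the left-invariant $h_i$ by the right-invariant Killing fields $\xi_{h_i}$ leaves $L_{\mh}$ unchanged, which is precisely what orthogonality of $e^{-\gamma D}$ provides.
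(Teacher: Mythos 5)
Your proposal is correct and follows essentially the same route as the paper: reduce to expressing $L=b^2+L_{\mh}$ via Theorem \ref{teo:main}, note that skew-symmetry of $D$ makes $b$ a left-invariant Killing field and $L_{\mh}$ a Killing tensor in $\Sym^2(\mh)$, and invoke Proposition \ref{propdecomp} for $L_{\mh}$. Your explicit verification of $L_{\mh}=\sum_i\xi_{h_i}^2$ via orthogonality of $e^{-\gamma D}$ is just Proposition \ref{propdecomp} unwound in this special case, so nothing genuinely new is added or needed.
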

\begin{proof}
If $(G,g)$ is flat, then $D$ is skew-symmetric by the results cited above. Therefore, the left-invariant vector field on $G$ induced by $b$ is Killing due to Corollary \ref{cor:dK12}(1) and $L_\mh=b^2-L$  is also a symmetric Killing tensor. 
		This implies, by Proposition \ref{propdecomp}, that $L_{\mh}$ is a polynomial in right-invariant Killing vector fields. From Theorem \ref{teo:main}, we obtain that every left-invariant symmetric Killing tensor $K$ is a polynomial in left- and right-invariant Killing vector fields. 
\end{proof}

This proposition answers by the affirmative Question \ref{q:question}. For constant non-zero scalar curvature, the situation is different as we shall see next. First of all, there are no non-trivial left-invariant Killing vector fields in this case.

\begin{lemma} If $(G,g)$ has constant non-zero curvature, then no non-zero left-invariant vector field is a Killing vector field.
\end{lemma}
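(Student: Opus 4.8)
The plan is to characterize left-invariant Killing vector fields via the criterion already established in the excerpt and then rule them out using the structure of $D$ forced by constant non-zero curvature. Recall from the discussion following \eqref{eqomg} that a left-invariant vector field on $G$ determined by $x\in\mg$ is Killing if and only if $\ad_x$ is skew-symmetric with respect to $g$. So the statement is equivalent to the purely algebraic claim that no non-zero $x\in\mg$ has skew-symmetric $\ad_x$, under the hypothesis $D=\lambda\Id+A$ with $\lambda\neq 0$ and $A\in\so(\mh)$.

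First I would write an arbitrary $x\in\mg$ as $x=\gamma b+h$ with $\gamma\in\R$ and $h\in\mh$, and compute $\ad_x$ explicitly. Since $\mh$ is abelian, for $h'\in\mh$ one has $\ad_x(h')=[\gamma b+h,h']=\gamma D(h')$, and $\ad_x(b)=-\gamma D(h)-[h,b]$; more carefully, $[x,b]=\gamma[b,b]+[h,b]=-D(h)$, so $\ad_x(b)=-D(h)\in\mh$. Thus in the orthogonal splitting $\mg=\R b\oplus\mh$, the endomorphism $\ad_x$ sends $b\mapsto -D(h)$ and acts on $\mh$ as $\gamma D$, with image contained in $\mh$. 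I would then impose skew-symmetry: the restriction $\ad_x|_{\mh}=\gamma D$ must itself be skew-symmetric as an endomorphism of $\mh$.

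The key step is that $\gamma D$ skew-symmetric forces $\gamma=0$. Indeed, writing $D=\lambda\Id+A$ with $A\in\so(\mh)$, the symmetric part of $\gamma D$ is $\gamma\lambda\,\Id$, which vanishes only if $\gamma\lambda=0$; since $\lambda\neq 0$ this gives $\gamma=0$. Once $\gamma=0$, we have $x=h\in\mh$, and the remaining condition is that $\ad_h$ be skew-symmetric. But $\ad_h$ sends $b\mapsto -D(h)$ and kills all of $\mh$ (as $\mh$ is abelian), so its image is $\R D(h)$ while it annihilates $\mh\ni D(h)$ whenever $D(h)\neq 0$; comparing $g(\ad_h b,b)=0$ against skew-symmetry on the pair $(b,D(h))$ shows $g(\ad_h b,D(h))=-g(b,\ad_h D(h))=0$, i.e. $|D(h)|^2=0$, hence $D(h)=0$. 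Since $D=\lambda\Id+A$ with $A$ skew-symmetric, $g(D(h),h)=\lambda|h|^2$, so $D(h)=0$ forces $\lambda|h|^2=0$ and therefore $h=0$.

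The main obstacle, and the place to be careful, is the mixed behavior of $\ad_h$ on the $b$-direction: the condition is not simply that $\ad_h|_\mh$ be skew (that part is automatic since $\ad_h|_\mh=0$), but that the full endomorphism on $\mg$ be skew, which is what produces the constraint $D(h)=0$. Organizing the argument around the invertibility of $D$ — note that $\lambda\neq 0$ together with $A\in\so(\mh)$ makes $D=\lambda\Id+A$ invertible, since $g(Dh,h)=\lambda|h|^2\neq 0$ for $h\neq 0$ — gives the cleanest route: $\gamma D$ skew and $D$ having nonzero symmetric part $\lambda\Id$ immediately yields $\gamma=0$, and $D(h)=0$ with $D$ invertible yields $h=0$, so $x=0$.
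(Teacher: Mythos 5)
Your proposal is correct and follows essentially the same route as the paper: since $\d(x)=-2S_{\ad_x}$, requiring $\ad_x$ to be skew-symmetric is exactly the paper's condition $\d(x)=0$, and both arguments then split into the $\mh$-block (forcing $\gamma\lambda=0$, hence $\gamma=0$) and the mixed $b$-$\mh$ block (forcing $D(h)=0$, hence $h=0$ by invertibility of $D=\lambda\Id+A$). The only blemish is the garbled intermediate expression $\ad_x(b)=-\gamma D(h)-[h,b]$, which you immediately correct to $\ad_x(b)=-D(h)$.
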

\begin{proof}
The curvature hypothesis implies that $D = \lambda \Id+A$  with $\lambda\neq 0$ and $A$ skew-symmetric. In particular $D$ is invertible. Assuming that $x = \gamma b + h \in \mg$, with $\gamma\in \mathbb{R}$ and $h\in\mh$, determines a left-invariant Killing vector field, we obtain
	\begin{multline*}
			0 = \d(x)   = \gamma \d(b) + \d(h)  =- \tfrac{\gamma}{2}D(L_{\mh}) + b\cdot D(h) \Leftrightarrow \left\{ \begin{array}{l}
				D(h) = 0 \\
				\tfrac{\gamma}{2}D(L_{\mh}) = 0
			\end{array} \right.   \Leftrightarrow \left\{ \begin{array}{l}
				h=0 \\
				\gamma = 0
			\end{array} \right. .
		\end{multline*} It then follows that $x = 0$.    
\end{proof}

Consequently, in the case of constant non-zero curvature, Question \ref{q:question} is reduced to whether any left-invariant Killing tensor can be written as a polynomial in Killing vector fields that are either right-invariant or induced by skew-symmetric derivations. We will show that the answer is negative in this case:

	\begin{theorem} \label{teo:seccte}
 If $(G,g)$ has constant non-zero sectional curvature, then the metric cannot be expressed as a polynomial in right-invariant vector fields and Killing vector fields induced by skew-symmetric derivations. 
	\end{theorem}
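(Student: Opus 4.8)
The plan is to argue by contradiction, converting the algebraic hypothesis into a pointwise identity on $\mg$ via the function $\Omega$ from \eqref{mapomeg}, which is linear in its tensor argument and multiplicative over symmetric products by \eqref{omegvf}. First I would record the structural input from the curvature hypothesis: constant non-zero curvature forces $D=\lambda\Id+A$ with $\lambda\neq0$ and $A\in\so(\mh)$, so $D$ is invertible. In particular every $T\in\Dera(\mg)$ must have $v=T(b)=0$: by \eqref{Tderivati} one has $\Im D\subset v^\bot\cap\mh$, and invertibility of $D$ gives $\mh=\Im D\subset v^\bot$, whence $v=0$. Plugging $v=0$ into \eqref{OmegaTt} shows $\Omega_{\xi_T}(\gamma b+h)\in\mh$; similarly $\Omega_{\xi_{h_i}}(\gamma b+h)=e^{-\gamma D}(h_i)\in\mh$ by \eqref{Omegahi}, while $\Omega_{\xi_b}(\gamma b+h)=b+\phi(w)$ with $\phi(w)\in\mh$ by \eqref{Omegab}. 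Thus the only generator carrying a $b$-component is $\xi_b$, and that component is the constant $b$.

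Next, suppose for contradiction that $g$, equivalently $L=2g$, is a polynomial in right-invariant Killing vector fields and Killing vector fields induced by skew-symmetric derivations. Since all these generators lie in $\Kil^1(G)$ while $L\in\Sym^2(\mg)$, and since the maps $x\mapsto\xi_x$ and $T\mapsto\xi_T$ are linear, such an expression is necessarily a sum of symmetric products of exactly two generators lying in the span of $\xi_b,\xi_{h_1},\dots,\xi_{h_n}$ and $\{\xi_T:T\in\Dera(\mg)\}$; that is, $L=\sum_{a,b}c_{ab}\,\eta_a\cdot\eta_b$ for finitely many $\eta_a$ in this span and a symmetric real matrix $(c_{ab})$. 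Expanding each $\eta_a$ in the spanning set, the first step yields $\Omega_{\eta_a}(w)=\mu_a\,b+\psi_a(w)$, where $\mu_a\in\R$ is the $\xi_b$-coefficient of $\eta_a$ and $\psi_a(w)\in\mh$.

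Now I apply $\Omega$ to the assumed identity. Because $L$ is left-invariant, $\Omega_L$ is the constant $L$, so linearity and \eqref{omegvf} give $L=\sum_{a,b}c_{ab}\,\Omega_{\eta_a}(w)\cdot\Omega_{\eta_b}(w)$ for all $w$. Decomposing $\Sym^2(\mg)=\R b^2\oplus(b\cdot\mh)\oplus\Sym^2(\mh)$ and comparing the (uniquely determined) $\Sym^2(\mh)$-components, the terms $\mu_a\mu_b\,b^2$ and $b\cdot(\mu_a\psi_b+\mu_b\psi_a)$ drop out and one is left with $\sum_{a,b}c_{ab}\,\psi_a(w)\cdot\psi_b(w)=L_{\mh}$ for every $w=\gamma b+h$. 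The contradiction comes from letting $|\gamma|\to\infty$ in the direction fixed by $\mathrm{sign}(\lambda)$: writing $e^{-\gamma D}=e^{-\lambda\gamma}e^{-\gamma A}$ with $e^{-\gamma A}$ orthogonal, one has $e^{-\gamma D}\to0$, and then \eqref{Omegahi}, \eqref{Omegab} and \eqref{OmegaTt} show each $\psi_a(\gamma b+h)\to0$; hence the left-hand side tends to $0$, while $L_{\mh}=\sum_i h_i^2\neq0$ is constant, which is absurd.

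I expect the technical heart to be the asymptotic vanishing of the $\mh$-valued functions $\psi_a$, together with the verification that $v=0$. The decay requires identifying the closed forms of the three generating $\Omega$-functions: after factoring $1/\gamma$ from the series in \eqref{Omegab} and \eqref{OmegaTt} these become $\tfrac1\gamma(\Id-e^{-\gamma D})h$ and $\tfrac1\gamma D^{-1}(\Id-e^{-\gamma D})T^{\mh}(h)$, and they decay \emph{precisely} because $\lambda\neq0$ forces $e^{-\gamma D}\to0$ while $D^{-1}$ exists. This is exactly the feature absent in the flat case of Proposition \ref{pro:flat}, where $\lambda=0$ lets $b$ induce a left-invariant Killing field and makes $L_{\mh}$ expressible through the allowed generators; so the use of $\lambda\neq0$ is what separates the negative answer here from the positive one there.
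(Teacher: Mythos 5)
Your proposal is correct and follows essentially the same route as the paper: both convert the assumed polynomial expression into an identity for the constant function $\Omega_L$ (using $v=0$ for skew-symmetric derivations and the explicit formulas for $\Omega_{\xi_b}$, $\Omega_{\xi_{h_i}}$, $\Omega_{\xi_T}$) and derive a contradiction from the behaviour of the $\mh$-components along the ray $\gamma b$. The only difference is in how the contradiction is extracted: the paper deduces that $e^{-\gamma D}(L_{\mh})$ must be constant in $\gamma$, hence $D(L_{\mh})=4S_D=0$ and $D$ would be skew-symmetric, whereas you project onto $\Sym^2(\mh)$ and let $|\gamma|\to\infty$ so that $e^{-\gamma D}\to0$; both hinge on $\lambda\neq0$ in the same way.
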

 	\begin{proof}  First notice that for any $T=b\wedge v + T^\mh\in \Dera(\mg)$, we must have $v=0$ and $[T^\mh,D] = 0$, due to \eqref{Tderivati}.
	
Assume that the metric can be expressed as a degree 2 polynomial in the algebraic Killing vector fields of $G$. Then there exist real constants $A_{i,j}$, $B_{i,j}$ and $C_{i,j}$ and skew-symmetric derivations $\{ T_i = T^{\mh}_i  \}_{1\leq i \leq m}\subset  \Dera(\mg) $ such that
        \begin{multline}\label{tequation}
			L= A_{0,0} \xi_b \cdot \xi_b + \sum_{1\leq i \leq n}A_{0,i} \xi_b \cdot \xi_{h_i}   + \sum_{1\leq i \leq j \leq n} A_{i,j} \xi_{h_{i}} \cdot \xi_{h_{j}} + \sum_{1\leq i\leq m} B_{0,i}\xi_b \cdot \xi_{T_{i}} \\ + \sum_{\substack{1\leq i \leq n\\ 1\leq j\leq m}} B_{i,j} \xi_{h_{i}}\cdot \xi_{T_{j}} + \sum_{1\leq i\leq j \leq m} C_{i,j} \xi_{T_{i}}\cdot\xi_{T_{j}}.
		\end{multline} 
 Recall that for every $x\in\mg$ and $T\in \Dera(\mg)$, $\xi_x$ denotes the left-invariant vector field on $G$ induced by $x$ and $\xi_T$ is the Killing vector field induced by $T$ (see Section \ref{sec:leftinv}). We are going to compute $\Omega_L(\gamma b+h)$, for $\gamma\in \mathbb{R}$ and $h\in\mh$, using the right hand side of this equality, which has to coincide with $L$, since the metric is left-invariant. By linearity of $\Omega$ and \eqref{omegvf}, we compute $\Omega_L$ using the formulas \eqref{Omegahi}, \eqref{Omegab} and \eqref{OmegaTt} (for $v=0$), which gives
  \begin{eqnarray*}
      \Omega_{T_i}(\gamma b + h) =  T^{\mh}_i(h) - \tfrac{\gamma}{2}D T^{\mh}_i(h) + \tfrac{\gamma^2}{3!} D^2T^{\mh}_i(h)-\ldots.
  \end{eqnarray*} For $\gamma = 0$ and $h=0$, one has
		\[
		 b^2 + L_{\mh} =\Omega_{L}(0) = A_{0,0} b\cdot b +   \sum_{1\leq i \leq n} A_{0,i} b\cdot h_{i} + \sum_{1\leq i \leq j \leq n} A_{i,j} h_{i} \cdot h_{j} .
		\] This implies $A_{i,j} = \delta_{i,j}$, for all $0\leq i\leq j\leq n$, where $\delta_{i,j}=1$ if $i=j$ and zero otherwise. Now, for $h=0$ and arbitrary $\gamma$ we get
		\[ 
		\Omega_{L} (\gamma b) =   b^2   + e^{-\gamma D  }(L_{\mh})   = b^2  +  L_{\mh} -\gamma D(L_{\mh}) +\tfrac{\gamma^2}{2}D^2(L_{\mh})-\ldots .
		\]
		The above expression is equal to $L$ if and only if $D(L_{\mh}) =0$. By the condition ${D(L_{\mh})  = 4S_D }$ of Proposition \ref{properties}, it would then imply that $D$ is skew-symmetric and thus $\lambda = 0$, which contradicts the curvature assumption.	Therefore, $L$ cannot be written as in \eqref{tequation}.  \end{proof}

The last theorem implies, by Takeuchi and Thompson's result, that negatively curved almost abelian Lie groups carry Killing vector fields that are not algebraic:
\begin{corollary}\label{cor:Kvf}
     The Lie algebra of Killing vector fields of a connected almost abelian Lie group with left-invariant metric of constant non-zero sectional curvature contains strictly the vector space spanned by Killing vector fields induced by skew-symmetric derivations and by right-translations. 
 \end{corollary}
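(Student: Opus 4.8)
The plan is to derive the corollary as a short logical consequence of Theorem \ref{teo:seccte} combined with the Takeuchi--Thompson result \cite{takekilli,Thomps}. I would write $\mathcal K$ for the Lie algebra of all Killing vector fields of $(G,g)$ and $\mathcal A\subseteq\mathcal K$ for the subspace spanned by the right-invariant Killing vector fields together with those induced by skew-symmetric derivations. The inclusion $\mathcal A\subseteq\mathcal K$ is immediate from Definition \ref{def:algkill}, and the whole task is to show it is \emph{strict}. A preliminary observation is that, by the Lemma preceding Theorem \ref{teo:seccte}, there are no non-trivial left-invariant Killing vector fields in the constant non-zero curvature setting; hence $\mathcal A$ already exhausts the full space of algebraic Killing vector fields, and any polynomial in algebraic Killing vector fields is a polynomial in right-invariant and skew-symmetric-derivation-induced ones.

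The strategy is to use the metric itself as the witness of the strict inclusion. Since $(G,g)$ has constant sectional curvature, the Takeuchi--Thompson theorem applies and guarantees that the symmetric Killing $2$-tensor $g$ (equivalently $L=2g$) can be written as a polynomial in Killing vector fields, that is, in elements of $\mathcal K$. I would then argue by contradiction: assuming $\mathcal A=\mathcal K$, this polynomial expression for $L$ would be a polynomial in right-invariant Killing vector fields and Killing vector fields induced by skew-symmetric derivations. This contradicts Theorem \ref{teo:seccte}, which states precisely that $L$ admits no such expression. Therefore $\mathcal A\subsetneq\mathcal K$, which is the assertion of the corollary.

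The argument is essentially a bookkeeping step, so there is no serious computational obstacle; the only point requiring care is the reconciliation of the two inputs. Takeuchi--Thompson produces a decomposition into the \emph{full} Killing algebra, while Theorem \ref{teo:seccte} forbids a decomposition into the \emph{algebraic} subspace, and the entire force of the corollary comes from this discrepancy. The subtle verification is that the notion of ``algebraic Killing vector field'' from Definition \ref{def:algkill} genuinely collapses to the two families appearing in Theorem \ref{teo:seccte}, which is guaranteed by the vanishing of left-invariant Killing vector fields established in the preceding Lemma.
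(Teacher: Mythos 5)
Your argument is correct and is precisely the one the paper intends: Takeuchi--Thompson gives $L$ as a polynomial in Killing vector fields, Theorem \ref{teo:seccte} forbids such an expression using only right-invariant and derivation-induced ones, and the contradiction forces the inclusion to be strict. The paper states this as an immediate consequence without writing out the details; your version, including the observation that left-invariant Killing fields vanish so the algebraic ones reduce to the two stated families, is a faithful elaboration of the same reasoning.
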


At the Lie group level, this corollary has implications on the structure of the isometry group. For instance, if $G$ is simply connected, the group $H:=\Auto(G)\ltimes G$ is a Lie subgroup of $\Iso(G,g)$ where, for $f\in \Auto(G)$ and $u\in G$, the action of $f$ on $\mathcal L_u$ is $\mathcal L_{f(u)}$. The Lie algebra of $H$ is the semidirect product $\Dera(\mg)\ltimes \mg$, and is the span of the Killing vector fields that are induced by right-translations of elements in $\mg$ or by skew-symmetric derivations as described in Section \ref{sec:leftinv}. Corollary \ref{cor:Kvf} in particular implies that $H$ is strictly contained in $\Iso(G,g)$ when $G$ is simply connected almost abelian with constant non-zero curvature.

	\bibliography{biblio}
	\bibliographystyle{plain}

\end{document}